\titleformat*{\section}{\large\bfseries}
\titleformat*{\subsection}{\normalsize\bfseries}
\newlength{\VerticalSpaceAfterParagraph}
\titlespacing*{\paragraph}{0pt}{\VerticalSpaceAfterParagraph}{1em}
\setlist
  {
    topsep = 5.0pt plus 2.0pt minus 3.0pt,
    partopsep = 1.5pt plus 1.0pt minus 1.0pt,
    parsep = 2.5pt plus 1.25pt minus 0.5pt,
    itemsep = 0pt plus 1.25pt minus 0.5pt
  }
\theoremstyle{plain}
\newtheorem{theorem}{Theorem}
\newtheorem{proposition}[theorem]{Proposition}
\newtheorem{lemma}[theorem]{Lemma}
\newtheorem{corollary}[theorem]{Corollary}
\newtheorem{question}[theorem]{Question}
\theoremstyle{definition}
\newtheorem{definition}[theorem]{Definition}
\newtheorem{example}[theorem]{Example}
\newtheorem{problem}[theorem]{Problem}
\newtheorem{notation}[theorem]{Notation}
\newtheorem{construction}[theorem]{Construction}
\theoremstyle{remark}
\newtheorem{remark}[theorem]{Remark}
\newcommand\blfootnote[1]
      \renewcommand\thefootnote{}%
\newcommand\mapsfrom{\mathrel{\reflectbox{\ensuremath{\mapsto}}}}
\crefname{(cond)}{condition}{conditions}
\Crefname{(cond)}{Condition}{Conditions}
\numberwithin{theorem}{section}
\numberwithin{equation}{theorem}
\crefname{problem}{Problem}{Problems}
\title{Counting divisorial contractions with centre a $cA_n$-singularity}
\author{Erik Paemurru}
\date{4th~July 2024}
\newcommand\keywords{Weighted blowup, terminal singularity, Mori theory}
\newcommand\subjclass{14E30 (Primary) 14E05, 14J30}
\begin{document}

\maketitle

\blfootnote{To appear in Publications of the Research Institute for Mathematical Sciences, Kyoto University. Communicated by Y.~Namikawa. Received November 1, 2022. Revised February 23, 2023.}

\begin{abstract}
First, we simplify the existing classification due to Kawakita and Yamamoto of 3-dimensional divisorial contractions with centre a $cA_n$-singularity, also called compound $A_n$ singularity. Next, we describe the global algebraic divisorial contractions corresponding to a given local analytic equivalence class of divisorial contractions with centre a point. Finally, we consider divisorial contractions of discrepancy at least 2 to a fixed variety with centre a $cA_n$-singularity. We show that if there exists one such divisorial contraction, then there exist uncountably many such divisorial contractions.
\blfootnote{\textup{2020} \textit{Mathematics Subject Classification}. \subjclass{}.}%
\blfootnote{\textit{Keywords}. \keywords{}.}
\end{abstract}

\blfootnote{Mathematik und Informatik, Universität des Saarlandes, 66123 Saarbrücken, Germany.\\
\textit{Former address:} Department of Mathematics, University of Miami, Coral Gables, Florida 33146, USA.}
\blfootnote{\textit{e-mail:} \texttt{paemurru@math.uni-sb.de}}

\tableofcontents

\section{Introduction}

The minimal model program and the Sarkisov program give a general framework for the birational classification of algebraic varieties, a central problem in algebraic geometry. Morphisms called divisorial contractions play a major role in both the minimal model program and the Sarkisov program. Therefore, classifying divisorial contractions is a fundamental problem.

A \emph{divisorial contraction} is a proper birational morphism $\varphi\colon Y \to X$ between terminal algebraic varieties such that the exceptional locus of $\varphi$ is a prime divisor and $-K_Y$ is $\varphi$-ample. The explicit classification of 3-dimensional divisorial contractions where the centre is a point has been completed, except when the centre is a $cD_n$ or a $cE_n$-singularity and the discrepancy is~1, for which there are only unpublished manuscripts \cite{Haya,Hayb}. The case where the centre is a non-Gorenstein point has been done in \cite{Hay99,Hay00,Hay05,Kaw05,Kaw12,Kaw96} and the Gorenstein case in \cite{Kaw01,Kaw02,Kaw03}, \cite[Theorem~1.2]{Kaw05} and \cite{Yam18}.

Above, the divisorial contractions are classified up to \emph{local analytic equivalence}, meaning that we allow local analytic changes on $X$ around $P$ and on $Y$ around the exceptional locus. Since the local analytic germ of a $\mathbb Q$-factorial variety can be non-factorial, the classification is given more generally for $\mathbb Q$-Gorenstein varieties with terminal singularities without requiring $\mathbb Q$-factoriality. If a morphism $\varphi\colon Y \to X$ is a divisorial contraction in this sense, without requiring $\mathbb Q$-factoriality, and if $X$ is $\mathbb Q$-factorial, then we automatically find that $Y$ is $\mathbb Q$-factorial, since the prime exceptional divisor is Cartier.

In this paper we focus on $cA_n$-singularities, also called compound $A_n$ singularities, meaning that a general section through the point defines the surface $A_n$-singularity, see~\cref{def:pre cAn}. Compound $A_n$ singularities are the simplest 3-dimensional terminal hypersurface singularities. There is an on-going project with the goal of showing that all smooth Fano 3-folds are obtainable by deformations from singular toric 3-folds with $cA_n$-singularities, \cite{CR}.

The classification due to Hayakawa, Kawakita, Kawamata and Yamamoto gives a list of weighted blowups such that every divisorial contraction is locally analytically equivalent to at least one member of the list. One way to improve the classification is to find which members of the classification lists give locally analytically equivalent blowups:

\begin{problem} \label{pro:int loc ana equiv classes}
Describe the local analytic equivalence classes of 3-dimensional divisorial contractions with centre a point.
\end{problem}

This is roughly what was asked in \cite[Problem~3.8]{Cor00}.

We have solved \cref{pro:int loc ana equiv classes} for $cA_n$-singularities in \cref{thm:cou simpler theorem,thm:cou div contr same type are isom}. This can drastically simplify the classification, as the complicated family in \cref{thm:pre div contr cAn}(3) reduces to just one simple case \cref{thm:cou simpler theorem}(3).

The next step is to classify divisorial contractions globally algebraically:

\begin{problem} \label{pro:int global algebraic classification}
Describe all global algebraic blowups up to isomorphism over the base that are locally analytically equivalent to a given weighted blowup.
\end{problem}

We have solved \cref{pro:int global algebraic classification} completely in \cref{thm:a2a constructing weighted blowups}. The global algebraic classification has applications in birational rigidity, finding birational relations and computing Sarkisov links, see \cite{AK16,AZ16,Oka14,Oka18,Oka20,Pae20}.

To prove that a given morphism is a divisorial contraction of a certain type, for example when computing Sarkisov links, it is best to have a classification list where the conditions are as mild and as easy to check as possible. One way to phrase this:

\begin{problem} \label{pro:int algorithm}
Describe an algorithm to determine whether a given weighted blowup is locally analytically equivalent to a given member of the classification list.
\end{problem}

We have solved \cref{pro:int algorithm} for $cA_n$-singularities in \cref{thm:cou simpler theorem,thm:cou div contr same type are isom}. It is straight-forward to determine the weight of a power series and it is algorithmic to check the singularity type of a simple singularity (\cref{def:pre simple singularity}). To check whether a given singularity is of type $A_n$, $D_n$, $E_6$, $E_7$ or $E_8$, see \cite[16.2 The determinator 1.--$9_k$.]{AGZV85} or \cite[Theorems~I.2.48, I.2.51 and I.2.53]{GLS07}. It can be computed using a computer algebra system, for example Singular \cite{Singular}.

On the other hand, to prove local properties or local inequalities such as \cite[Theorem~1.2]{KOPP24}, it helps to have a list which is as specific as possible, containing only few members. Even though the classification lists in the literature contain uncountable families of weighted blowups, a countably infinite list or even a finite list in certain cases may suffice.

\begin{problem} \label{pro:int counts}
Given a variety $X$ and a point~$P$, determine whether there exist finitely many, countably many or uncountably many divisorial contractions to $X$ with centre~$P$, depending on the singularity type of~$P$, where the counting is up to local analytic equivalence and up to global algebraic isomorphism over~$X$.
\end{problem}

We have solved \cref{pro:int counts} for $cA_n$-singularities in \cref{thm:cou counting divisorial contractions}. We have also included the case of smooth points in \cref{tab:int counts}. By the proof of \cref{thm:cou counting divisorial contractions} the cardinalities up to local biholomorphism around the exceptional loci over the base are the same as up to global algebraic isomorphism over the base in the case of $cA_n$-points and smooth points.

\begin{table}[h!]
\centering
\caption{Counting divisorial contractions with centre a $cA_n$-point\label{tab:int counts}}
\begin{tabular}{ccc}
\toprule
singularity & isomorphism over base & local analytic equivalence\\
\midrule
smooth point & uncountable & countable\\
$cA_n$, only discr $1$ & $n$ & $\lceil n/2 \rceil$\\
$cA_n$, admits discr $>1$ & uncountable & finite\\
\bottomrule
\end{tabular}
\end{table}

As an example application of such results, \cite{Oka20} uses the specific counts of divisorial contractions of a given type described in \cite{Hay99} (such as \cite[Theorem~6.4]{Hay99}) to prove birational birigidity of varieties.

\Cref{tab:int counts} raises the following questions:

\begin{question} \label{que:int finitely many up to loc ana}
Let $X$ be a 3-dimensional variety and $P \in X$ a singular point. Do there exist only finitely many divisorial contractions to $X$ with centre $P$ up to local analytic equivalence?
\end{question}

\begin{question} \label{que:int uncountable many non-min disc div contrs}
Let $X$ be a 3-dimensional variety and $P \in X$ a point. Is it true that there exist uncountably many divisorial contractions to $X$ with centre $P$ up to isomorphism over $X$ if and only if there exists a divisorial contraction to $X$ with centre $P$ with discrepancy greater than~$1$?
\end{question}

Regarding \cref{que:int uncountable many non-min disc div contrs}, it is known that there exist only finitely many divisorial contractions of discrepancy at most~$1$ to a fixed variety, see \cref{thm:cou finitely many if disc at most 1}. By \cite[§6]{Pae20} we expect there to be only finitely many divisorial contractions of ordinary type that are $(r_1, r_2, a, 1)$-blowups with centre a $cA_n$-singularity even if the discrepancy $a$ is greater than 1 as long as the inequalities $a \leq r_1 \leq r_2$ are satisfied. This does not answer \cref{que:int uncountable many non-min disc div contrs} negatively, see \cref{thm:cou counting divisorial contractions}.

The proofs in this paper rely on the concept of \emph{weight-respecting maps}, see \cref{def:wei weight-respecting}, which is comparable to the equivalence relation `$\sim$' defined in \cite[3.7 Weighted valuations]{Hay99} for 3-dimensional index $\geq 2$ terminal singularities embedded as hypersurfaces in orbifolds.

\section{Meaning of classification}

The classification due to Hayakawa, Kawakita, Kawamata and Yamamoto is a classification list, as defined in \cref{def:mea classification list}, except that it does not satisfy \cref{itm:int except cD cE} if the discrepancy of $\varphi$ is $1$ and the centre is either a $cD$ or a $cE$ point.

\begin{definition} \label{def:mea classification list}
A set $L$ is called a \textbf{classification list} if it consists of pairs $(\bm w, Z)$, where $\bm w := (w_1, \ldots, w_5) \in ((1/m)\mathbb Z)^5$ is a vector of positive rational numbers and $Z$ is a codimension 2 complete intersection complex analytic space with an isolated singularity at the origin $\bm 0$ inside an orbifold $\mathbb C^5/\mathbb Z_m$, such that
\begin{enumerate}
\item \label{itm:int except cD cE} for every $3$-dimensional divisorial contraction $\varphi$ with centre a point: $\varphi$ is locally analytically equivalent (\cref{def:pre local analytic equivalence}) to the $\bm w$-blowup of $Z$ for some $(\bm w, Z)$ in~$L$,
\item for every pair $(\bm w, Z)$ in $L$: there exists a $\mathbb Q$-Gorenstein variety $X$ with terminal singularities and a point $P \in X$ such that $(X, P)$ is locally biholomorphic to~$(Z, \bm 0)$, and
\item \label{itm:int constructing a divisorial contraction} for every $\mathbb Q$-Gorenstein variety $X$ with terminal singularities and point $P \in X$: if $(X, P)$ is locally biholomorphic to $(Z, \bm 0)$ for some $(\bm w, Z)$ in~$L$, then there exists a divisorial contraction to $X$ which is locally analytically equivalent to the $\bm w$-blowup of~$Z$.
\end{enumerate}
\end{definition}

The divisorial contraction in \cref{itm:int constructing a divisorial contraction} can be constructed using either \cref{thm:a2a constructing projective morphisms} or \cref{thm:a2a constructing weighted blowups}.

Given a classification list $L$ and two pairs $(\bm w, Z)$ and $(\bm w', Z')$, \cref{pro:int loc ana equiv classes} asks to determine when the weighted blowups of $Z$ and $Z'$ are locally analytically equivalent. For $cA_n$-points, we prove their local analytic equivalence if $\bm w = \bm w'$ in \cref{thm:cou div contr same type are isom}. It should not be difficult to prove in the case where there are only finitely many such divisorial contractions, which happens for example when the discrepancy is at most 1, see \cref{thm:cou finitely many if disc at most 1}. See \cite{Hay99,Hay00} for explicit descriptions and counts of minimal discrepancy divisorial contractions with centre a non-Gorenstein point.

\begin{definition}
We say a classification list $L$ is a \textbf{nice classification list} if for every two pairs $(\bm w, Z)$ and $(\bm w', Z')$ in~$L$: the $\bm w$-blowup of $Z$ and the $\bm w'$-blowup of $Z'$ are locally analytically equivalent if and only if $Z$ and $Z'$ are biholomorphic around the origins and $\bm w = \bm w'$.
\end{definition}

Finding a nice classification list, if it exists, would solve \cref{pro:int loc ana equiv classes}.

\begin{question} \label{pro:int decide whether a div contr is in a given class}
Does there exist a nice classification list?
\end{question}

By \cref{thm:cou simpler theorem,thm:cou div contr same type are isom}, the answer to \cref{pro:int decide whether a div contr is in a given class} is `yes' in the case of $cA_n$-points. We give two nice classification lists $L$ and~$L'$ for $cA_n$-singularities and smooth points, corresponding to columns 1, 2, 3 and columns 1, 2, 4, 5 of \cref{tab:int classification}, respectively. The final column `disc' in \cref{tab:int classification} gives the discrepancy of the divisorial contraction.

For $cA_n$-singularities, the classification list contains only weighted blowups such that $X$ is embedded locally analytically as a hypersurface $\mathbb V(f)$ in~$\mathbb C^4$. We can also embed it as a codimension 2 complete intersection $\mathbb V(f, x_5)$ in $\mathbb C^5$ choosing the weight $w_5$ to be any positive integer. In such cases we write only the first four variables $x, y, z, t$ and their weights $w_1, w_2, w_3, w_4$. Similarly $\mathbb C^3$ can be embedded in $\mathbb C^5$ by $\mathbb V(x_4, x_5)$ with any positive integer weights $w_4, w_5$ for $x_4, x_5$, so we give only the first three weights.

The first nice classification list for $cA_n$-singularities and smooth points is given by
\begin{equation} \label{equ:int classification list small}
L := L_1 \cup L_2 \cup L_3 \cup L_4,
\end{equation}
where
\[
\begin{aligned}
L_1 & := \mleft\{ \bigl((1, a, b), \mathbb C^3 \bigr) \;\middle|\; \text{$a$ and $b$ are coprime positive integers, $a \leq b$} \mright\}\\
L_2 & := \mleft\{ \bigl((r_1, r_2, a, 1), \mathbb V(xy + g(z, t)) \bigr) \;\middle|\; \operatorname{wt} g = r_1 + r_2 \mright\}\\
L_3 & := \mleft\{ \bigl((1, 5, 3, 2), \mathbb V(xy + z^2 + t^3) \bigr) \mright\}\\
L_4 & := \mleft\{ \bigl((4, 3, 2, 1), \mathbb V(x^2 + y^2 + z^3 + xt^2) \bigr) \mright\},
\end{aligned}
\]
where in $L_2$ the convergent power series $g \in \mathbb C\{z, t\}$ defines an isolated singularity at the origin and $r_1, r_2$ and $a$ are positive integers that satisfy $r_1 \leq r_2$, $a$ divides $r_1 + r_2$, $a$ is coprime to both $r_1$ and $r_2$, and $a(n + 1) = r_1 + r_2$. The polynomial $xy + g(z, t)$ with $\operatorname{wt} g = r_1 + r_2$ appears in \cite[Theorem~1.1]{Kaw02} and the polynomial $xy + z^2 + t^3$ appears in \cite[Theorem~1.13]{Kaw03}, whereas the complicated condition of \cite[Theorem~2.6]{Yam18} is simplified in \cref{thm:cou simpler theorem} to $x^2 + y^2 + z^3 + xt^2$.

The second nice classification list for $cA_n$-singularities and smooth points is given by
\[
L' := L_1 \cup L_2' \cup L_3' \cup L_4',
\]
where
\begin{align*}
L_2' & := \mleft\{ \bigl((r_1, r_2, a, 1), \mathbb V(f) \bigr) \;\middle|\; \text{$(\mathbb V(f), \bm 0)$ is a $cA_n$-singularity},\, \operatorname{wt} f = r_1 + r_2 \mright\}\\
L_3' & := \mleft\{ \bigl((1, 5, 3, 2), \mathbb V(f) \bigr) \;\middle|\; \text{$(\mathbb V(f), \bm 0)$ is an $A_2$-singularity},\, \operatorname{wt} f = 6 \mright\}\\
L_4' & := \mleft\{ \bigl((4, 3, 2, 1), \mathbb V(f) \bigr) \;\middle|\; \text{$(\mathbb V(f), \bm 0)$ is an $E_6$-singularity},\, \operatorname{wt} f = 6 \mright\},
\end{align*}
where the convergent power series $f \in \mathbb C\{x, y, z, t\}$ defines an isolated singularity at the origin and in $L_2$ the positive integers $r_1, r_2$ and $a$ satisfy the same conditions as for the first classification list. The singularities $cA_n$, $A_2$ and $E_6$ are defined in \cref{def:pre simple singularity,def:pre cAn}.

\newcommand\mtext[1]{\begin{minipage}{.22\textwidth} \vspace{4pt plus 2pt} #1 \vspace{4pt plus 2pt} \end{minipage}}
\newcommand\stext[1]{\begin{minipage}{.16\textwidth} \vspace{4pt plus 2pt} #1 \vspace{4pt plus 2pt} \end{minipage}}

\newcommand\verticalspace[1]{\begin{minipage}{.01\textwidth}\vspace{#1\baselineskip} \mbox{ }\end{minipage}}

\setlength\tabcolsep{0.015\textwidth}
\begin{table}[h!]
\centering
\caption{Local analytic equivalence classes of divisorial contractions, $cA_n$-singularities\label{tab:int classification}}
\begin{tabular}{cccccc}
\toprule
$\bm w$ & conditions & $f$ & $\operatorname{wt} f$ & sing & discr\\
\midrule
$(1, a, b)$ & $(a, b) = 1,~ a \leq b$ &  & \verticalspace{1.5} & sm & $a+b$\\
$(r_1, r_2, a, 1)$ & \mtext{$r_1 \leq r_2$, $a \mid r_1 + r_2$, $(r_1, a) = (r_2, a) = 1$, $a(n + 1) = r_1 + r_2$} & \stext{$xy + g(z, t)$,\\$\operatorname{wt} g = r_1 + r_2$} & $r_1 + r_2$ & $cA_n$ & $a$\\
$(1, 5, 3, 2)$ && $xy + z^2 + t^3$ & $6$ & $A_2$ & $4$\\
$(4, 3, 2, 1)$ && $x^2 + y^2 + z^3 + xt^2$ & $6$ & $E_6$ & $3$\\
\bottomrule
\end{tabular}
\end{table}

\renewcommand{\arraystretch}{1}

\section{Preliminaries}

\begin{notation}
Let $\mathbb C$ denote the complex numbers. A \textbf{variety}, short for algebraic variety, is defined to be an integral separated scheme of finite type over~$\mathbb C$. All morphisms of varieties are defined over~$\mathbb C$. The $\mathbb C$-algebra of power series that are absolutely convergent in a neighbourhood of the origin is denoted $\mathbb C\{\bm x \}$, short for $\mathbb C\{x_1, \ldots, x_n\}$. The complex space, short for complex analytic space, corresponding to a variety $X$ is denoted $X^{\mathrm{an}}$. A~\textbf{singularity} is defined to be a complex space germ (see \cite[Definition~I.1.47]{GLS07}).
If $I$ is an ideal of regular functions on a variety or an ideal of holomorphic functions on a complex space, then $\mathbb V(I)$ denotes the zero locus of~$I$.
If $I$ is an ideal of holomorphic function germs on a complex space germ $(X, P)$, then $(\mathbb V(I), P)$ denotes the (possibly non-reduced) subgerm defined by~$I$ (see \cite[§I.1.4]{GLS07}).

Given a convergent power series $f \in \mathbb C\{\bm x\} := \mathbb C\{x_1, \ldots, x_n\}$ we define the \textbf{multiplicity} of $f$, denoted $\operatorname{mult} f$, by
\[
\operatorname{mult} f := \min \{ i_1 + \ldots + i_n \mid \text{$x_1^{i_1} \cdot \ldots \cdot x_n^{i_n}$ has non-zero coefficient in $f$} \}.
\]
Given positive integer weights $w_1, \ldots, w_n$ for variables $x_1, \ldots, x_n$ we define the \textbf{weight} of~$f$, denoted $\operatorname{wt} f$, by
\[
\operatorname{wt} f := \min \{ w_1 i_1 + \ldots + w_n i_n \mid \text{$x_1^{i_1} \cdot \ldots \cdot x_n^{i_n}$ has non-zero coefficient in $f$} \}
\]
if $f$ is non-zero, and we define $\operatorname{wt} 0 = \infty$ otherwise. We denote the quasihomogeneous weight $d$ part of $f$ by $f_{\operatorname{wt} = d}$. The \textbf{quadratic part} of $f$ is defined to be the homogeneous degree 2 part of~$f$. The \textbf{quadratic rank} of $f$ is defined to be the rank of the symmetric matrix $M$ with complex coefficients such that the quadratic part of $f$ is equal to $\bm x^T M \bm x$ where $\bm x$ is the vector $(x_1, \ldots, x_n)$.

We denote the \textbf{Jacobian ideal $\mleft( \partial f/\partial x_1, \ldots, \partial f/\partial x_n \mright) \subseteq \mathbb C\{\bm x\}$} of $f$ by~$j(f)$. We remind that the \textbf{Milnor algebra} of $f$ is the $\mathbb C$-algebra $\mathbb C\{\bm x\} / j(f)$. We say that a set $S$ of monomials of $\mathbb C[\bm x] := \mathbb C\{x_1, \ldots, x_n\}$ is a \textbf{monomial spanning set} for a $\mathbb C$-algebra $\mathbb C\{\bm x\} / J$ if the set $\{s + J \mid s \in S\}$ generates the $\mathbb C$-vector space $\mathbb C\{\bm x\} / J$, and we say $S$ is a \textbf{monomial basis} for the $\mathbb C$-algebra $\mathbb C\{\bm x\}/J$ if $\{s + J \mid s \in S\}$ is a basis for the $\mathbb C$-vector space $\mathbb C\{\bm x\}/J$. By Zorn's Lemma every $\mathbb C$-algebra $\mathbb C\{\bm x\}/J$ has a (possibly infinite) monomial basis.
\end{notation}

\begin{definition} \label{def:pre divisorial contraction}
A \textbf{divisorial contraction} is a proper birational morphism $\varphi\colon Y \to X$ between normal $\mathbb Q$-Gorenstein varieties with terminal singularities such that
\begin{enumerate}
\item the exceptional locus of $\varphi$ is a prime divisor and
\item $-K_Y$ is $\varphi$-ample.
\end{enumerate}
\end{definition}

\begin{definition}[{\cite[Definition~2.14]{Pae21}}] \label{def:pre local analytic equivalence}
Let $\varphi\colon Y \to X$ and $\varphi'\colon Y' \to X'$ be birational morphisms of varieties (or bimeromorphic holomorphisms of complex analytic spaces). We say that an isomorphism $X \to X'$ \textbf{lifts} if there exists an isomorphism $Y \cong Y'$ such that the diagram
\begin{equation*}
\begin{tikzcd}
Y \arrow[r, ""]{r} \arrow[d, "\varphi"]{r} & Y' \arrow[d, "\varphi'"]{r}\\
X \arrow[r, ""]{r} & X'
\end{tikzcd}
\end{equation*}
commutes. We say that $\varphi$ and $\varphi'$ are \textbf{equivalent} if there exist an isomorphism $X \cong X'$ that lifts. We say $\varphi$ and $\varphi'$ are \textbf{locally equivalent} if there exist isomorphic open subsets $U \subseteq X$ and $U' \subseteq X'$ containing the centres of the morphisms $\varphi$ and $\varphi'$ such that the restrictions $\mleft.\varphi\mright|_{\varphi^{-1}U}\colon \varphi^{-1}U \to U$ and $\mleft.\varphi'\mright|_{\varphi'^{-1}U'}\colon \varphi'^{-1}U' \to U'$ are equivalent.
\end{definition}

If we consider the complex space corresponding to a variety or when we wish to emphasize that we are working in the category of complex spaces, then we say \textbf{analytically equivalent} or \textbf{locally analytically equivalent}.

\begin{definition} \label{def:pre wei blup}
Let $n$ be a positive integer and let $\bm w = (w_1, \ldots, w_n)$ be positive integers, called the weights of the blowup. Define a $\mathbb C^*$-action on $\mathbb C^{n+1}$ by $\lambda \cdot (u, x_1, \ldots, x_n) = (\lambda^{-1} u, \lambda^{w_1} x_1, \ldots, \lambda^{w_n} x_n)$ and define $T$ by the geometric quotient $(\mathbb C^{n+1} \setminus \mathbb V(x_1, \ldots, x_n)) / \mathbb C^*$ (or its analytification). Then the map $\varphi\colon T \to \mathbb C^n$, $[u, x_1, \ldots, x_n] \mapsto (u^{w_1} x_1, \ldots, u^{w_n} x_n)$ is called the \emph{$\bm w$-blowup of~$\mathbb C^n$}. If $Z \subseteq \mathbb C^n$ is a closed subvariety (or a closed complex subspace $Z \subseteq D$ where $D \subseteq \mathbb C^n$ is open) and $\tilde Z$ is the closure of $\varphi^{-1}(Z \setminus \{\bm 0\})$ in $T$ (in~$\varphi^{-1} D$), then the restriction $\mleft.\varphi\mright|_{\tilde Z}\colon \tilde Z \to Z$ is called the \emph{$\bm w$-blowup of~$Z$}. Let $\rho\colon Y \to X$ be a surjective birational morphism of varieties (or a surjective bimeromorphic holomorphism of complex spaces). Given an open subset $U \subseteq X$ containing the centre of $\rho$ and an isomorphism $U \cong X' \subseteq \mathbb C^n$ taking a point $P \in X$ to the origin~$\bm 0$, the map $\rho$ is called the \textbf{$\bm w$-blowup of $X$ at~$P$} if the restriction $\mleft.\rho\mright|_{\rho^{-1}U}\colon \rho^{-1}U \to U$ is equivalent, through the given isomorphism $U \cong X'$, to the $\bm w$-blowup of~$X'$.
\end{definition}

\begin{remark}
\begin{enumerate}[label=(\alph*), ref=\alph*]
\item A weighted blowup crucially depends both on the isomorphism $U \cong X'$ and a choice of coordinates $x_1, \ldots, x_n$, even though it is not explicit in the notation.
\item Replacing $\bm w$ by $(w_1/g, \ldots, w_n/g)$ in \cref{def:pre wei blup}, where $g$ is the greatest common divisor of $w_1, \ldots, w_n$, gives an isomorphic blowup over~$X$.
\item By \cite[Theorem~5.1.11]{CLS11}, the weighted blowup of an affine space in \cref{def:pre wei blup} coincides with the toric description of subdividing a cone in \cite[Proposition-Definition~10.3]{KM92}.
\end{enumerate}
\end{remark}

\begin{definition} \label{def:pre simple singularity}
A \textbf{simple hypersurface singularity}, also known as an \emph{ADE-singularity}, is a complex space germ $(X, P)$ isomorphic to $(\mathbb V(f), \bm 0)$ where $f \in \mathbb C\{x_1, \ldots, x_n\}$ is one of the following:
\begin{itemize}
\item[$A_k$:] $x_1^{k+1} + x_2^2 + \ldots + x_n^2$, \quad $k \geq 1$,
\item[$D_k$:] $x_1(x_2^2 + x_1^{k-2}) + x_3^2 + \ldots + x_n^2$, \quad $k \geq 4$,
\item[$E_6$:] $x_1^3 + x_2^4 + x_3^2 + \ldots + x_n^2$,
\item[$E_7$:] $x_1(x_1^2 + x_2^3) + x_3^2 + \ldots + x_n^2$,
\item[$E_8$:] $x_1^3 + x_2^4 + x_3^2 + \ldots + x_n^2$,
\end{itemize}
where $n$ is at least 1 in case $A_k$ and at least 2 in all other cases.
\end{definition}

\begin{definition} \label{def:pre cAn}
Let $k$ be a positive integer. A \textbf{compound $A_k$-singularity}, denoted $cA_k$, is a complex space germ isomorphic to $(\mathbb V(xy + g), \bm 0) \subseteq (\mathbb C^4, \bm 0)$, where $g \in \mathbb C\{z, t\}$ has multiplicity $k+1$.
\end{definition}

We state the known classification of divisorial contractions to both smooth points and $cA_n$-points.

\begin{theorem}[{\cite[Theorem~1.1]{Kaw01}}] \label{thm:pre div contr smooth}
Let $P$ be a smooth point of a $3$-dimensional $\mathbb Q$-Gorenstein variety $X$ with terminal singularities. Let $\varphi\colon Y \to X$ be a surjective birational morphism with centre~$P$. Then $\varphi$ is a divisorial contraction if and only if $\varphi$ is locally analytically equivalent to the $(1, a, b)$-blowup of~$\mathbb A^3$, where $a$ and $b$ are coprime positive integers.
\end{theorem}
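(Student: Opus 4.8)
The statement is an ``if and only if'', but essentially all of the content lies in the forward (``only if'') direction, so I would treat the easy direction first. For the reverse direction, fix coprime positive integers $a,b$ and realise the $(1,a,b)$-blowup torically: in the fan of $\mathbb A^3$ one subdivides the cone $\langle e_1,e_2,e_3\rangle$ by the single primitive ray through $v=(1,a,b)$. The exceptional prime divisor $E$ is the divisor of this ray, and the discrepancy is $a(E)=(1+a+b)-1=a+b$, in agreement with \cref{tab:int classification}. Reading off the three maximal cones of the star subdivision, one is unimodular (so $Y$ is smooth there), and solving $v=\sum w_ie_i$ for the eliminated basis vector identifies the other two torus-fixed points as the cyclic quotients $\tfrac{1}{a}(1,-1,-b)$ and $\tfrac{1}{b}(1,-1,-a)$; since $\gcd(a,b)=1$ these have the shape $\tfrac{1}{r}(1,-1,c)$ with $\gcd(c,r)=1$ and are therefore terminal by the terminal lemma (the classification of three-dimensional terminal cyclic quotient singularities). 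That $\operatorname{Exc}\varphi=E$ is prime with $\rho(Y/X)=1$ and $-K_Y$ is $\varphi$-ample is the standard behaviour of the toric extraction attached to one new ray. As terminality, $\operatorname{Exc}=E$ prime, and $\varphi$-ampleness of $-K_Y$ are all local analytic conditions, the conclusion transfers from $\mathbb A^3$ to an arbitrary $X$ with $P$ a smooth point through the isomorphism $U\cong X'\subseteq\mathbb A^3$ of \cref{def:pre wei blup}.

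For the forward direction, let $\varphi\colon Y\to X$ be a divisorial contraction with centre the smooth point $P$. Working locally analytically I may assume $(X,P)\cong(\mathbb C^3,\bm 0)$; write $E$ for the exceptional divisor, $v=v_E$ for the associated divisorial valuation of $\mathbb C\{x,y,z\}$, and $a=a(E)>0$ for its discrepancy. Since $X$ is smooth it is $\mathbb Q$-factorial, so (as noted in the introduction) $Y$ is $\mathbb Q$-factorial, and as $\varphi$ contracts the single prime divisor $E$ also $\rho(Y/X)=1$; moreover, since $K_Y=\varphi^*K_X+aE$ with $a>0$, the $\varphi$-ampleness of $-K_Y$ forces $-E$ to be $\varphi$-ample. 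Hence, locally over $P$, $Y=\operatorname{Proj}_X\bigoplus_{m\ge 0}\{f\in\mathbb C\{x,y,z\}:v(f)\ge m\}$ is the blowup of the $v$-filtration, and this Proj depends only on the valuation $v$. The strategy is therefore to show that $v$ is a monomial (weighted) valuation $\operatorname{wt}_{\bm w}$ in suitable analytic coordinates: for then the identical Proj computation applied to the toric $\bm w$-blowup identifies $\varphi$ with the $\bm w$-blowup of \cref{def:pre wei blup}, and only the determination of $\bm w$ remains.

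The crux, and the step I expect to be by far the hardest, is exactly this monomialisation of $v$. A priori $v$ is an arbitrary divisorial valuation over a smooth $3$-fold point, and monomial valuations are very special among these, so terminality of $Y$ and $\varphi$-ampleness of $-K_Y$ must enter in an essential way to rigidify it. My plan is to descend to surfaces. A general hyperplane section $S=\mathbb V(\ell)\ni P$ is a smooth surface germ, and because $Y$ is terminal its strict transform $\tilde S$ acquires only canonical (Du Val) singularities, with $\tilde S\to S$ a birational morphism extracting the restricted valuation $v|_S$ and contracting a rational curve. Over a smooth surface point such an extraction is governed by Hirzebruch--Jung theory: the Du Val-plus-ampleness constraint pins the exceptional configuration down to the linear chain of a cyclic quotient resolution, so that $v|_S=\operatorname{wt}_{(p,q)}$ with $\gcd(p,q)=1$ and $\tilde S$ carries the expected $A_{p-1}$ and $A_{q-1}$ points. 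Letting $\ell$ vary, I would track the associated graded ring $\operatorname{gr}_v\mathbb C\{x,y,z\}$: the surface computations say each $\operatorname{gr}_{v|_S}$ is a polynomial ring in two variables, and the technical heart is to glue these across the family of sections and conclude that $\operatorname{gr}_v$ is itself a polynomial ring on the initial forms of three coordinates, i.e.\ that $v=\operatorname{wt}_{\bm w}$ is monomial (with $\gcd(w_1,w_2,w_3)=1$ after dividing out a common factor). Showing that no ``extra'' relations appear in $\operatorname{gr}_v$ — their presence being precisely the signature of a non-monomial $v$ and of non-terminal points on $Y$ — is where the argument is genuinely delicate and where I would expect to need the terminal hypothesis on $Y$ in full, not merely its surface shadow.

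Granting $v=\operatorname{wt}_{(w_1,w_2,w_3)}$ with $\gcd(w_1,w_2,w_3)=1$, the isomorphism of $Y$ with the $(w_1,w_2,w_3)$-blowup reduces the last point to a chart computation. The blowup has three torus-fixed points, each a cyclic quotient $\tfrac{1}{w_i}(\ldots)$ found by solving $v=\sum w_je_j$. Applying the terminal lemma chart by chart shows that terminality fails whenever all $w_i\ge 2$ — for example $(2,3,5)$ produces the chart $\tfrac{1}{3}(1,1,1)$, which is canonical but not terminal — so that, after reordering, necessarily $w_1=1$. The two surviving quotient singularities are then $\tfrac{1}{w_2}(1,-1,-w_3)$ and $\tfrac{1}{w_3}(1,-1,-w_2)$, terminal precisely when $\gcd(w_2,w_3)=1$ (otherwise their singular loci are positive-dimensional, hence non-terminal). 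Setting $a=w_2$ and $b=w_3$ gives coprime positive integers with $\varphi$ the $(1,a,b)$-blowup, which completes the classification and recovers the discrepancy $a+b$ of \cref{tab:int classification}.
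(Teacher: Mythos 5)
There is nothing in the paper to compare against: \cref{thm:pre div contr smooth} is quoted from Kawakita \cite{Kaw01} and is not proved in this paper at all, and Kawakita's own proof occupies essentially the whole of that article, proceeding via Reid's singular Riemann--Roch formula and a numerical analysis of the singularities of $Y$ --- not via surface sections. Judged on its own, your proposal is fine where the problem is easy and missing where it is hard. The reverse direction (toric charts, the terminal lemma, discrepancy $a+b$, transfer of the local analytic conditions) is correct and standard, and so is your final reduction showing that a \emph{monomial} valuation whose blowup is terminal must, up to order, have weights $(1,a,b)$ with $\gcd(a,b)=1$. But the entire content of the theorem is the step you yourself call the crux: that the divisorial valuation of an arbitrary divisorial contraction centred at a smooth point becomes monomial in suitable analytic coordinates. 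For this you offer only a plan, with the decisive step (``no extra relations appear in $\operatorname{gr}_v$'') explicitly left open, so the proposal is not a proof.

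Worse, the plan rests on a false claim. It is not true that terminality of $Y$ forces the strict transform $\tilde S$ of a general hyperplane section $S\ni P$ to have only Du Val singularities. Take the $(1,2,5)$-blowup of $\mathbb C^3$ (coprime weights, hence a divisorial contraction by the easy direction you proved). For a general linear form $\ell=x+\beta y+\gamma z$, in the chart $U_3=\mathbb C^3/\tfrac15(4,3,1)$ the pullback of $\ell$ is $z'\bigl(x'+\beta y'z'+\gamma z'^4\bigr)$, so $\tilde S$ is the quotient of the smooth hypersurface $x'=-\beta y'z'-\gamma z'^4$ by $\mathbb Z_5$ acting on $(y',z')$ with weights $(3,1)$; equivalently, $\tilde S\to S$ is the $(2,5)$-weighted blowup of $\mathbb C^2$. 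Thus $\tilde S$ has a cyclic quotient point of type $\tfrac15(1,3)$, a Hirzebruch--Jung singularity that is not Du Val (not even Gorenstein), alongside the $A_1$-point $\tfrac12(1,1)$. What terminality plus adjunction actually gives on such an $\tilde S$ is klt (quotient) singularities, which is far too weak to run your Hirzebruch--Jung pinning argument, and the exceptional points are general $\tfrac1p(1,q)$-points rather than the $A_{p-1}$ and $A_{q-1}$ points you predict. The genuine Du Val statement in this circle of ideas concerns general members of $|-K_Y|$ (the general elephant), which is a theorem of Kawakita \cite{Kaw03} that, in the smooth-centre case, is deduced from the very classification you are trying to prove; invoking it here would be circular. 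So the monomialisation step remains entirely open, and the surface-descent route you propose breaks at its first claim.
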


\begin{remark}
By \cite[Lemma~11.4.10]{CLS11} the discrepancy of the $(1, a, b)$-blowup of $\mathbb A^3$ is $a + b$.
\end{remark}

\begin{theorem} \label{thm:pre div contr cAn}
Let $n$ be a positive integer. Let $P$ be a $cA_n$-point of a $\mathbb Q$-Gorenstein variety with terminal singularities. Let $\varphi\colon Y \to X$ be a surjective birational morphism with centre~$P$. Then $\varphi$ is a divisorial contraction if and only if one of the following holds:
\begin{enumerate}
\item \label[(cond)]{itm:pre div contr cAn - gen} $\varphi$ is locally analytically equivalent to the $(r_1, r_2, a, 1)$-blowup of $\mathbb V(f)$ at $\bm 0$ where $f \in \mathbb C\{x, y, z, t\}$ is such that
  \begin{enumerate}
  \item $r_1, r_2$ and $a$ are positive integers such that $r_1 \leq r_2$, $a(n+1) = r_1 + r_2$, $a$ divides $r_1 + r_2$, $a$ is coprime to both $r_1$ and $r_2$ and
  \item \label[(cond)]{itm:pre div contr cAn - gen f} $f = xy + g(z, t)$ where $\operatorname{wt} g = r_1 + r_2$,
  \end{enumerate}
\item \label[(cond)]{itm:pre div contr cAn - cA1} $n = 1$ and $\varphi$ is locally analytically equivalent to the $(1, 5, 3, 2)$-blowup of $\mathbb V(f)$ at $\bm 0$ where $f \in \mathbb C\{x, y, z, t\}$ is such that
  \begin{enumerate}
  \item \label[(cond)]{itm:pre div contr cAn - cA1 f} $f = xy + z^2 + t^3$
  \end{enumerate}
\item \label[(cond)]{itm:pre div contr cAn - cA2} $n = 2$ and $\varphi$ is locally analytically equivalent to the $(4, 3, 2, 1)$-blowup of $\mathbb V(f)$ at $\bm 0$ where $f \in \mathbb C\{x, y, z, t\}$ is such that
  \begin{enumerate}
  \item \label[(cond)]{itm:pre div contr cAn - cA2 f} $f = x^2 + y^2 + 2cxy + 2xp(z, t) + 2cyp_{\operatorname{wt}=3}(z, t) + z^3 + g(z, t)$ where $c \in \mathbb C \setminus \{-1, 1\}$, $\operatorname{wt} g \geq 6$, the power series $p$ contains only monomials of weight $2$ and $3$ for the weights $(4, 3, 2, 1)$, the coefficient of $t^2$ is non-zero in $p$ and $\deg g(z, 1) \leq 2$.
  \end{enumerate}
\end{enumerate}
\end{theorem}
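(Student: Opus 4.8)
The plan is to obtain the statement by assembling the three parts of the Kawakita--Yamamoto classification and then checking that the resulting trichotomy is both exhaustive and sufficient. Since a $cA_n$-point is an isolated hypersurface singularity of a threefold it is Gorenstein, so $X$ has index $1$ at $P$ and the discrepancy of any divisorial contraction $\varphi$ with centre $P$ is a positive integer. The necessity direction --- that every such $\varphi$ is locally analytically equivalent to one of the three listed weighted blowups --- is supplied by \cite[Theorem~1.1]{Kaw02} for \cref{itm:pre div contr cAn - gen}, by \cite[Theorem~1.13]{Kaw03} for \cref{itm:pre div contr cAn - cA1}, and by \cite[Theorem~2.6]{Yam18} for \cref{itm:pre div contr cAn - cA2}. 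First I would restrict the conclusions of these references to a $cA_n$-centre and match their coordinate normalisations and weight vectors to the forms $xy + g(z,t)$, $xy + z^2 + t^3$ and $x^2 + y^2 + z^3 + xt^2$ recorded in the statement, so that the three sources fit together without gaps or clashes of notation.

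For the sufficiency direction I would verify the two axioms of \cref{def:pre divisorial contraction} directly from the chart description in \cref{def:pre wei blup}. Each listed $\bm w$-blowup is the toric extraction of a monomial valuation from a quasihomogeneous hypersurface $\mathbb V(f) \subseteq \mathbb C^4$, so its discrepancy is computed by the adjunction formula $a(E) = (w_1 + w_2 + w_3 + w_4) - \operatorname{wt}(f) - 1$, which returns $a$, $4$ and $3$ in the three cases respectively; in particular each discrepancy is positive, so that $-K_Y = \varphi^*(-K_X) + a(E)(-E)$ is $\varphi$-ample once $Y$ is terminal, since $-E$ is $\varphi$-ample and $\varphi^*(-K_X)$ is $\varphi$-trivial. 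That the exceptional locus is a prime divisor, and that $Y$ is $\mathbb Q$-Gorenstein with terminal singularities, I would read off the explicit affine charts of the blowup: in \cref{itm:pre div contr cAn - gen} the coprimality of $a$ to $r_1$ and $r_2$ forces the cyclic quotient singularities on $Y$ to be terminal, while in the two sporadic cases \cref{itm:pre div contr cAn - cA1,itm:pre div contr cAn - cA2} a direct inspection of the finitely many charts settles terminality.

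The main obstacle is the necessity direction, where the genuine content of the cited theorems lies: one must show that the divisorial valuation extracted by an arbitrary $\varphi$ becomes, after an analytic change of coordinates, the monomial valuation of one of these three weighted blowups. Were one to reprove this intrinsically rather than quote the three references, the decisive tool would be the weight-respecting framework of \cref{def:wei weight-respecting}, modelled on the relation `$\sim$' of \cite[3.7]{Hay99}: one reduces the germ to its weighted-homogeneous leading form, lists the weight vectors $\bm w$ compatible with $Y$ being terminal and $E$ irreducible, and checks that over a $cA_n$-centre the only admissible solutions are the family $(r_1, r_2, a, 1)$ with $a(n+1) = r_1 + r_2$ together with the two exceptional numerical solutions $(1,5,3,2)$ and $(4,3,2,1)$ that arise only for the small indices $n = 1$ and $n = 2$. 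I would close by recording that the three cases are not asserted to be mutually exclusive --- a sporadic blowup of case (2) or (3) may be locally analytically equivalent to a member of the family in case (1) --- so that the statement is read correctly as an inclusive disjunction and nothing is lost in merging the three sources.
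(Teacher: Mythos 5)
Your route coincides with the paper's own treatment: \cref{thm:pre div contr cAn} is stated in the preliminaries as a known result, and the paper's entire justification is the assembly of \cite[Theorem~1.1]{Kaw02}, \cite[Theorem~1.13]{Kaw03} and \cite[Theorem~2.6]{Yam18}; both directions of the ``if and only if'' are already contained in those references, so your chart-by-chart verification of sufficiency is redundant, though harmless. However, your plan contains one misreading and one genuine gap. The misreading: the statement does \emph{not} record the form $x^2+y^2+z^3+xt^2$ in case (3); \cref{itm:pre div contr cAn - cA2 f} is Yamamoto's complicated condition taken verbatim from \cite[Theorem~2.6]{Yam18}, so for that case there is nothing to match. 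The reduction of Yamamoto's condition to $x^2+y^2+z^3+xt^2$ is part (3) of the paper's later \cref{thm:cou simpler theorem}, whose proof is a long chain of weight-respecting coordinate changes and is one of the paper's main contributions; your ``matching of normalisations'' step, taken at face value, would silently require proving that theorem.

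The gap: the one place where the statement genuinely differs from the cited references --- and hence the only mathematics a proof of this statement must supply beyond citation --- is in case (1), where Kawakita's condition ``$z^{(r_1+r_2)/a}$ has non-zero coefficient in $f$'' is replaced by ``$a(n+1) = r_1+r_2$''. Your proposal never addresses this, and the two conditions are not literally the same; the equivalence uses the $cA_n$ hypothesis. Concretely, write $f = xy + g(z,t)$ with $\operatorname{mult} g = n+1$ (\cref{def:pre cAn}) and $\operatorname{wt} g = r_1+r_2$ for the weights $(a,1)$ on $(z,t)$. Every monomial $z^kt^l$ appearing in $g$ satisfies $k+l \geq (ak+l)/a \geq (r_1+r_2)/a$, so if $z^{(r_1+r_2)/a}$ appears then $\operatorname{mult} g = (r_1+r_2)/a$, i.e.\ $a(n+1) = r_1+r_2$. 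Conversely, if $a(n+1) = r_1+r_2$ and $a > 1$, then any monomial $z^kt^l$ of $g$ with $k+l = n+1$ has $ak+l \geq a(n+1) = a(k+l)$, forcing $l = 0$, so $z^{(r_1+r_2)/a}$ does appear; but for $a = 1$ the pointwise implication fails (e.g.\ $g = z^nt + t^{n+1}$ satisfies $a(n+1)=r_1+r_2$ while $z^{n+1}$ is absent), and one must first apply the linear change $t \mapsto t + cz$ for generic $c$, which is weight-respecting for the weights $(1,1)$ and hence harmless up to local analytic equivalence. Without some such argument, quoting the three references does not yield the statement as written.
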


\Cref{itm:pre div contr cAn - gen f,itm:pre div contr cAn - cA1 f,itm:pre div contr cAn - cA2 f} are the same as in \cite[Theorem~1.1]{Kaw02}, \cite[Theorem~1.13]{Kaw03} and \cite[Theorem~2.6]{Yam18} except for the small difference in that the condition that \emph{$z^{(r_1 + r_2)/a}$ has a non-zero coefficient in~$f$} is replaced by the equivalent condition $a(n+1) = r_1 + r_2$. We simplify \cref{thm:pre div contr cAn} in \cref{thm:cou simpler theorem}. In particular, we show that we can replace \cref{itm:pre div contr cAn - cA2 f} with the much simpler condition $f = x^2 + y^2 + z^3 + x t^2$. This polynomial appears in \cite[Example~6.8]{Kaw03}.

\begin{remark}
By \cite[§3.9]{Hay99} the discrepancy in \cref{thm:pre div contr cAn} in cases \labelcref{itm:pre div contr cAn - gen}, \labelcref{itm:pre div contr cAn - cA1} and \labelcref{itm:pre div contr cAn - cA2} is respectively $a$, $4$ and $3$.
\end{remark}

\section{Weight-respecting maps}

The main tools we use in this paper are weight-respecting maps (see \cref{def:wei weight-respecting}) and some classical theorems from singularity theory in weight-respecting form (see \cref{thm:wei wei-resp power series to poly,thm:wei x1*x2 weight-respecting splitting,thm:wei wei-resp reduce simple sing to normal form}).

For \cref{def:wei weight-respecting,thm:wei wei-resp lifts}, let $n$ and $m$ be positive integers. Let $\bm x = (x_1, \ldots, x_n)$ and $\bm y = (y_1, \ldots, y_m)$ denote the coordinates on $\mathbb C^n$ and $\mathbb C^m$, respectively. Choose positive integer weights for $\bm x$ and~$\bm y$.

\begin{definition}[{\cite[Definition~4.1]{Pae21}}] \label{def:wei weight-respecting}
Let $X \subseteq \mathbb C^n$ and $X' \subseteq \mathbb C^m$ be complex analytic spaces. We say that a biholomorphic map $\psi\colon X \to X'$ taking $\bm 0$ to $\bm 0$ is \textbf{weight-respecting} if denoting its inverse by~$\theta$, we can locally analytically around the origins write $\psi = (\psi_1, \ldots, \psi_m)$ and $\theta = (\theta_1, \ldots, \theta_n)$ where for all $i$ and~$j$, the power series $\psi_j \in \mathbb C\{\bm x\}$ and $\theta_i \in \mathbb C\{\bm y\}$ satisfy $\operatorname{wt}(\psi_j) \geq \operatorname{wt}(y_j)$ and $\operatorname{wt}(\theta_i) \geq \operatorname{wt}(x_i)$.
\end{definition}

Compare \cref{def:wei weight-respecting} with the equivalence relation `$\sim$' defined in \cite[3.7 Weighted valuations]{Hay99} for 3-dimensional index $\geq 2$ terminal singularities embedded as hypersurfaces in orbifolds.

\begin{lemma}[{\cite[Corollary~4.4]{Pae21}}] \label{thm:wei wei-resp lifts}
If a biholomorphism from $X \subseteq \mathbb C^n$ to $X' \subseteq \mathbb C^m$ taking $\bm 0$ to~$\bm 0$ is weight-respecting, then it lifts to the weighted blown-up spaces.
\end{lemma}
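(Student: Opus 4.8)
The plan is to write the lift down explicitly in the homogeneous coordinates of \cref{def:pre wei blup} and to verify it is a well-defined biholomorphism over the base by using the weight inequalities of \cref{def:wei weight-respecting} in both directions. Give $\mathbb C^n$ and $\mathbb C^m$ the weights $\bm w = (w_1, \ldots, w_n)$ and $\bm{w}' = (w'_1, \ldots, w'_m)$ chosen on $\bm x$ and $\bm y$, so that $w_i = \operatorname{wt}(x_i)$ and $w'_j = \operatorname{wt}(y_j)$, and form the $\bm w$-blowup $\varphi\colon \tilde X \to X$ and the $\bm{w}'$-blowup $\varphi'\colon \tilde X' \to X'$. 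The key observation is that the weight-respecting hypothesis is exactly what lets one pull the component functions of $\psi$ through the blowdown. Substituting $x_i = u^{w_i} x_i$ into $\psi_j$ and collecting powers of $u$, every monomial of $\psi_j$ has $\bm w$-weight at least $w'_j$, so one may factor
\[
\psi_j(u^{w_1} x_1, \ldots, u^{w_n} x_n) = u^{w'_j}\,\Psi_j(u, x_1, \ldots, x_n),
\]
where $\Psi_j \in \mathbb C\{u, \bm x\}$ involves only non-negative powers of $u$; absolute convergence of $\Psi_j$ on a neighbourhood of $\{u = 0\}$ follows by dominating the series term-by-term for $|u| \le 1$ by the majorant of $\psi_j$. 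I would then define the candidate lift in homogeneous coordinates by $(u, \bm x) \mapsto (u, \Psi_1(u, \bm x), \ldots, \Psi_m(u, \bm x))$.

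The next step is to check this descends to a morphism of the quotients and commutes with the blowdowns. Since $X_i := u^{w_i} x_i$ is $\mathbb C^*$-invariant while $u^{w'_j}$ scales by $\lambda^{-w'_j}$, the displayed identity forces $\Psi_j(\lambda\cdot(u,\bm x)) = \lambda^{w'_j}\Psi_j(u,\bm x)$, which is precisely the weight with which $y_j$ transforms. Hence the map is equivariant with the identity on the $\mathbb C^*$-factor and descends to $\tilde X \to \tilde X'$, and commutativity with $\varphi, \varphi'$ is immediate because $\varphi'$ sends $(u, \bm\Psi)$ to $(u^{w'_j}\Psi_j)_j = (\psi_j(u^{w_i}x_i))_j = \psi(\varphi(u, \bm x))$.

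To upgrade this holomorphic lift to a biholomorphism I would run the same construction on the inverse $\theta$, which is weight-respecting by hypothesis, producing $\Theta_i \in \mathbb C\{u, \bm y\}$ with $\theta_i(u^{w'_1}y_1, \ldots) = u^{w_i}\Theta_i(u, \bm y)$ and a candidate lift $(u, \bm y) \mapsto (u, \bm\Theta)$. Substituting $\bm y = \bm\Psi$ and using $\theta \circ \psi = \operatorname{id}$ on $X$ gives $u^{w_i}\Theta_i(u, \bm\Psi) = \theta_i(\psi(\bm X)) = X_i = u^{w_i}x_i$ on $\tilde X$, whence $\Theta_i(u, \bm\Psi) = x_i$ there, so the two lifts are mutually inverse. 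The same defining relation with $\bm y = \bm 0$, together with $\theta(\bm 0) = \bm 0$, shows $\Theta_i(u, \bm 0) = 0$; this rules out $\bm\Psi(u, \bm x) = \bm 0$ when $\bm x \ne \bm 0$ and thus confirms the image stays out of the removed locus $\mathbb V(y_1, \ldots, y_m)$. Finally, on $\{u \ne 0\}$ the lift coincides with $\psi$ transported through $\varphi$ and $\varphi'$, so it carries $\varphi^{-1}(X \setminus \{\bm 0\})$ onto $\varphi'^{-1}(X' \setminus \{\bm 0\})$; being a biholomorphism of the ambient blown-up spaces it maps closures to closures and therefore restricts to a biholomorphism $\tilde X \to \tilde X'$ over the base, as required.

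The step I expect to be the main obstacle is the careful treatment of the exceptional divisor $\{u = 0\}$: one must confirm that $\Psi_j$ is genuinely holomorphic there (convergence after the rearrangement, and compatibility across the toric charts of $T$) and that the lift does not collapse the blown-up space into the removed locus. Both points are controlled by the weight inequalities — holomorphy by $\operatorname{wt}(\psi_j) \ge w'_j$, and non-degeneracy of the image by the corresponding inequalities for $\theta$ — so the proof rests on using the weight-respecting hypothesis simultaneously in both directions rather than on any delicate estimate.
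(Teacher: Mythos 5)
The paper itself contains no argument for this lemma --- it is imported verbatim from \cite[Corollary~4.4]{Pae21} --- so your proposal can only be measured against that cited proof, and it is in essence the same construction: substitute $x_i \mapsto u^{w_i}x_i$ into the components of $\psi$, use $\operatorname{wt}\psi_j \ge \operatorname{wt} y_j$ to factor out $u^{w'_j}$, check $\mathbb C^*$-equivariance in the quotient description of \cref{def:pre wei blup}, and run the same construction on the inverse $\theta$ to produce the inverse lift. The mathematical core of your write-up is sound: holomorphy of $\Psi_j$ across $\{u=0\}$ by term-by-term domination, equivariance forcing descent, and the identity $\Theta_i(u,\bm\Psi)=x_i$ on the proper transform (proved on the dense locus $u\neq 0$ and extended by continuity), which is what guarantees the image avoids the removed locus $\mathbb V(y_1,\ldots,y_m)$.

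One sentence, however, is false as stated and needs repair: the lift is not ``a biholomorphism of the ambient blown-up spaces''. It cannot be in general, because $\psi$ is only a biholomorphism of $X$ onto $X'$, not of $\mathbb C^n$ onto $\mathbb C^m$ (indeed $n \neq m$ is allowed), so the ambient map $[u,\bm x]\mapsto[u,\bm\Psi(u,\bm x)]$ is defined only on the open locus of $T$ where $\bm\Psi\neq\bm 0$ and need not be injective off the proper transform; likewise $\theta\circ\psi=\operatorname{id}$ holds only on $X$, so your two ambient maps are mutually inverse only along the proper transforms, not on ambient neighbourhoods. This does not sink the argument, because the ambient claim is never needed: the closure step requires only continuity of the lift on its open domain in $T$ (which contains the cone over $\tilde X$ near the exceptional fibre, precisely by your $\Theta$ argument), giving $\tilde\psi(\tilde X)\subseteq\tilde X'$ locally; and then the identities $\Theta_i(u,\bm\Psi)=x_i$ on $\tilde X$ and $\Psi_j(u,\bm\Theta)=y_j$ on $\tilde X'$, both obtained by your density argument, show that the restrictions $\tilde\psi|_{\tilde X}$ and $\tilde\theta|_{\tilde X'}$ are mutually inverse biholomorphisms between neighbourhoods of the exceptional loci, commuting with the blowdowns, which is exactly the assertion of the lemma. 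With that one justification rewritten, the proof is correct and matches the cited one.
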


Compare \cref{thm:wei wei-resp lifts} with \cite[Lemma~5.6]{Hay99}.

We remind that two convergent power series $f, g \in \mathbb C\{x_1, \ldots, x_n\}$ are said to be \textbf{right equivalent} if there exists a biholomorphic map germ $\varphi\colon (\mathbb C^n, \bm 0) \to (\mathbb C^n, \bm 0)$ such that $g = f \circ \varphi$, and are said to be \textbf{contact equivalent} if there exists a biholomorphic map germ $\varphi\colon (\mathbb C^n, \bm 0) \to (\mathbb C^n, \bm 0)$ and a unit $u \in \mathbb C\{x_1, \ldots, x_n\}$ such that $g = u(f \circ \varphi)$ (this is \cite[Definition~I.2.9]{GLS07}).

It is a standard result that every convergent power series is right equivalent to a polynomial (see the theorem in \cite[§6.3]{AGZV85} or \cite[Corollary~I.2.24]{GLS07}). By following the standard proof we show that, unsurprisingly, there also exists a weight-respecting right equivalence.

\begin{lemma} \label{thm:wei wei-resp power series to poly}
Let $n$ be a positive integer. Let $f \in \mathbb C\{x_1, \ldots, x_n\}$ define an isolated singularity at the origin with Milnor number~$\mu$. Then for every integer $N \geq \mu + 1$ there exists an automorphism $\Psi$ of $\mathbb C\{x_1, \ldots, x_n\}$ such that
\begin{enumerate}
\item $\Psi(f)$ is equal to the truncation of $f$ up to degree $N$ and
\item for every $i \in \{1, \ldots, n\}$ the truncation of $\Psi(x_i)$ up to degree $N - \mu$ is equal to $x_i$.
\end{enumerate}
\end{lemma}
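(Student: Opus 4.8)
The plan is to follow the standard proof of finite determinacy (as in \cite[Corollary~I.2.24]{GLS07}) by Moser's homotopy method, but to keep track of orders of vanishing throughout so as to extract the two degree bounds. Write $\mathfrak m = (x_1, \ldots, x_n) \subseteq \mathbb C\{x_1, \ldots, x_n\}$ for the maximal ideal and let $f_{\le N}$ denote the truncation of $f$ up to degree $N$, so that $h := f - f_{\le N} \in \mathfrak m^{N+1}$. The single algebraic input I would isolate first is that, since the Milnor algebra $\mathbb C\{\bm x\}/j(f)$ is a local Artinian ring of $\mathbb C$-dimension $\mu$, its maximal ideal is nilpotent of index at most $\mu$; hence $\mathfrak m^\mu \subseteq j(f)$.

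Next I would set up the homotopy $F_t := f - t\,h$ for $t \in [0, 1]$, which interpolates between $F_0 = f$ and $F_1 = f_{\le N}$, and look for a family of biholomorphic germs $\phi_t$ with $\phi_0 = \operatorname{id}$ satisfying $F_t \circ \phi_t = f$ for all $t$. Differentiating this identity in $t$ and writing $\dot\phi_t = \xi_t \circ \phi_t$ for a time-dependent vector field $\xi_t = (\xi_{t,1}, \ldots, \xi_{t,n})$ transforms the problem into solving
\[
h \;=\; \sum_{i=1}^n \xi_{t,i}\,\frac{\partial F_t}{\partial x_i}
\]
for each $t$, that is, exhibiting $h$ as a member of the Jacobian ideal $j(F_t)$ with coefficients whose orders I can control. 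Once such $\xi_t$ are found with $\xi_{t,i} \in \mathfrak m^{N-\mu+1}$, integrating the flow gives $\phi_t = \operatorname{id} + (\text{terms of order} \ge N - \mu + 1)$, and setting $\Psi(x_i) := (\phi_1^{-1})_i$ yields $\Psi(f) = f \circ \phi_1^{-1} = f_{\le N}$ exactly, since the flow solves the identity exactly, together with $\Psi(x_i) \equiv x_i$ modulo $\mathfrak m^{N-\mu+1}$, which are precisely the two assertions.

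The key estimate is that the order bound on the $\xi_{t,i}$ holds uniformly in $t$. Because $\partial F_t/\partial x_i = \partial f/\partial x_i - t\,\partial h/\partial x_i$ with $\partial h/\partial x_i \in \mathfrak m^{N}$, the generators of $j(F_t)$ agree with those of $j(f)$ modulo $\mathfrak m^N$, so $\mathfrak m^\mu \subseteq j(f) \subseteq j(F_t) + \mathfrak m^N$; using $N \ge \mu + 1$ to write $\mathfrak m^N \subseteq \mathfrak m \cdot \mathfrak m^\mu$, this becomes $\mathfrak m^\mu \subseteq j(F_t) + \mathfrak m\cdot\mathfrak m^\mu$, whence $\mathfrak m^\mu \subseteq j(F_t)$ for every $t$ by Nakayama's lemma. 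Consequently $h \in \mathfrak m^{N+1} = \mathfrak m^{N-\mu+1}\,\mathfrak m^\mu \subseteq \mathfrak m^{N-\mu+1}\,j(F_t)$, which produces the desired coefficients of order $\ge N - \mu + 1$.

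The main obstacle I expect is analytic rather than formal: I must carry out the division $h \in \mathfrak m^{N-\mu+1} j(F_t)$ with coefficients $\xi_{t,i}$ that are genuinely convergent and depend analytically on $t$, and then ensure the resulting flow exists and remains biholomorphic all the way to $t = 1$. I would handle this by treating $t$ as an extra variable, working in $\mathbb C\{x_1, \ldots, x_n, t\}$ and running the Nakayama argument relatively over $\mathbb C\{t\}$, so that the $\xi_{t,i}$ are convergent in $(\bm x, t)$ near the origin; the flow of the resulting analytic time-dependent vector field is then convergent in $(\bm x, t)$ by the standard theorem on analytic dependence of solutions of ordinary differential equations on time, and continues over the compact interval $[0,1]$ because the vector field vanishes to high order at $\bm 0$. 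Verifying these convergence and continuation statements, and checking that $\phi_1$ is an honest automorphism germ whose inverse has the same order of contact with the identity, is the part requiring genuine care; the degree bookkeeping above is then immediate.
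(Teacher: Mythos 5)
Your strategy coincides with the paper's: both run the standard finite determinacy proof (Moser's homotopy method) while keeping track of orders of vanishing, with the same two inputs --- the inclusion $\mathfrak m^\mu \subseteq j(f)$ and a Nakayama argument showing the Jacobian ideal does not change along the homotopy --- and the same bookkeeping ($\xi_{t,i} \in \mathfrak m^{N-\mu+1}$, hence the coordinate change agrees with the identity modulo $\mathfrak m^{N-\mu+1}$). The paper delegates the integration step to \cite[Theorem~I.2.22]{GLS07} instead of running the flow by hand, but that is a presentational difference, not a mathematical one.

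There is, however, one step that fails as written: the continuation of the flow to $t = 1$. Running the relative Nakayama argument in $\mathbb C\{x_1, \ldots, x_n, t\}$ produces coefficients $\xi_{t,i}$ convergent only on some polydisc $\{|x_i| < r,\ |t| < \varepsilon\}$, and $\varepsilon$ may be far smaller than $1$. The high-order vanishing of $\xi_t$ at $\bm x = \bm 0$ guarantees that trajectories starting near the origin do not escape in bounded time, but it cannot extend the flow past the $t$-radius of convergence of the vector field itself: for $t \geq \varepsilon$ there is no vector field to integrate, so your stated reason for continuation justifies the wrong obstruction. The repair is exactly what the paper does. Your division/Nakayama argument works verbatim in $\mathbb C\{\bm x, t - t_0\}$ for \emph{every} $t_0 \in [0,1]$, not just $t_0 = 0$: the only input is that $t\,\partial h/\partial x_i \in \mathfrak m^N \cdot \mathbb C\{\bm x, t - t_0\}$, which holds for any $t_0$ since $\partial h / \partial x_i \in \mathfrak m^N$. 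This yields, for each $t_0$, a neighbourhood $U_{t_0} \ni t_0$ and, for each $t' \in U_{t_0}$, a biholomorphic germ $\psi_{t'}$ with $(f - t'h) \circ \psi_{t'} = f - t_0 h$ and $x_i \circ \psi_{t'} - x_i \in \mathfrak m^{N+1-\mu}$. Compactness of $[0,1]$ then lets you compose finitely many such germs; the composite still conjugates $f_{\le N}$ to $f$ and still satisfies $x_i \circ \psi - x_i \in \mathfrak m^{N+1-\mu}$, since each factor takes $\mathfrak m$ into $\mathfrak m$. Your own observation that the estimate $\mathfrak m^\mu \subseteq j(F_t)$ holds uniformly in $t$ shows you had all the ingredients; the compactness of $[0,1]$ just has to be invoked at the level of solving the division near every $t_0$, not at the level of ODE continuation.
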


\begin{proof}
Denote $\mathbb C\{\bm x\} := \mathbb C\{x_1, \ldots, x_n\}$ and let $\mathfrak m := \sum x_i \mathbb C\{\bm x\}$ be the maximal ideal. The ideal $\mathfrak m^\mu$ is contained in the Jacobian ideal
\[
j(f) := \sum \frac{\partial f}{\partial x_i} \mathbb C\{\bm x\}
\]
of $f$ by the proof of the lemma in \cite[§5.5]{AGZV85}. Let $h \in \mathfrak m^{N+1}$. Define $F \in \mathbb C\{\bm x\}[t]$ by $F := f + th$. Below we construct a biholomorphic map germ $\psi\colon (\mathbb C^n, \bm 0) \to (\mathbb C^n, \bm 0)$ by following the proof of \cite[Theorem~I.2.23]{GLS07}.

First, we show that for every $t_0 \in \mathbb C$ we have the equality of ideals
\begin{equation} \label{equ:wei ideals main}
j(f) \cdot \mathbb C\{\bm x, t - t_0\} = \sum \frac{\partial F}{\partial x_i} \mathbb C\{\bm x, t - t_0\},
\end{equation}
where we consider $F = f + t_0 h + (t - t_0)h$ as an element of $\mathbb C\{\bm x, t - t_0\}$. Since $\mathfrak m^\mu$ is inside $j(f)$, we find the equality of ideals
\begin{equation} \label{equ:wei ideals helper}
j(f) \cdot \mathbb C\{\bm x, t - t_0\} = \sum \frac{\partial F}{\partial x_i} \mathbb C\{\bm x, t - t_0\} + \mathfrak m \cdot j(f) \cdot \mathbb C\{\bm x, t - t_0\}.
\end{equation}
Applying Nakayama Lemma (\cite[Proposition~B.3.6]{GLS07}) to \cref{equ:wei ideals helper} gives \cref{equ:wei ideals main}.

Now, \cref{equ:wei ideals main} implies that
\[
h \in \mleft(\mathfrak m^{N+1-\mu} \cdot \mathbb C\{\bm x, t - t_0\}\mright) \cdot \mleft(\sum \frac{\partial F}{\partial x_i} \mathbb C\{\bm x, t - t_0\}\mright).
\]
By \cite[Theorem~I.2.22(2) and Remark~I.2.22.1]{GLS07} for every $t_0 \in \mathbb C$ there exists an open neighbourhood $U_{t_0} \subseteq \mathbb C$ of $t_0$ such that for every $t' \in U_{t_0}$ there exists a biholomorphic map germ $\psi_{t'} \colon (\mathbb C^n, \bm 0) \to (\mathbb C^n, \bm 0)$ such that $x_i \circ \psi_{t'} - x_i \in \mathfrak m^{N+1-\mu}$ and $(f + t'h) \circ \psi_{t'} = f + t_0 h$. Since the interval $[0, 1]$ is compact, there exist finitely many biholomorphic map germs $\psi_{t_1}, \ldots, \psi_{t_k}$ such that the composition $\psi := \psi_{t_k} \circ \ldots \circ \psi_{t_1}$ satisfies $f \circ \psi = f + h$ and $x_i \circ \psi - x_i \in \mathfrak m^{N+1-\mu}$.

Finally, choosing $h$ to be the negative of the sum of the degree $>N$ parts of $f$ and choosing $\Psi$ to be the precomposition by $\psi$ proves the lemma.
\end{proof}

The lemma in \cite[§12.6]{AGZV85} is useful for computing normal forms of singularities. Here we give a weight-respecting version.

\begin{lemma} \label{thm:wei wei-resp normal forms}
Let $n$ be a positive integer and let $\bm w := (w_1, \ldots, w_n)$ be positive integer weights for the variables $x_1, \ldots, x_n$. Let $f \in \mathbb C\{\bm x\}$ define an isolated singularity at the origin. Let $f_0$ denote the least weight non-zero quasihomogeneous part of~$f$. Choose a monomial spanning set $S \subseteq \mathbb C[\bm x]$ for the Milnor algebra $\mathbb C\{\bm x\}/j(f_0)$ of~$f_0$.
Then there exists an automorphism $\Psi$ of $\mathbb C\{\bm x\}$ of the form $\forall i\colon \Psi(x_i) = x_i + g_i$, where each $g_i \in \mathbb C\{\bm x\}$ is either zero or satisfies
$\operatorname{wt} g_i > \operatorname{wt} x_i$, such that every monomial of $\mathbb C[\bm x]$ with weight greater than $\operatorname{wt} f_0$ that does not belong to $S$ has coefficient zero in~$\Psi(f)$.
\end{lemma}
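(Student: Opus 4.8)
The plan is to follow the classical normal-form reduction of \cite[§12.6]{AGZV85}, but to carry out every coordinate change so that it strictly raises weights. Write $d_0 := \operatorname{wt} f_0$ and decompose $f = \sum_{d \geq d_0} f_{\operatorname{wt}=d}$ into quasihomogeneous parts. Since $f_0$ is quasihomogeneous, each $\partial f_0/\partial x_i$ is quasihomogeneous of weight $d_0 - w_i$, so $j(f_0)$ is homogeneous for the weight grading and the Milnor algebra $\mathbb C\{\bm x\}/j(f_0)$ inherits a grading. Writing $V_d$ for the space of quasihomogeneous polynomials of weight $d$ and $J_d := j(f_0) \cap V_d$, the assumption that the monomials of $S$ span the Milnor algebra, combined with homogeneity, yields for every weight $d$ the decomposition
\[
V_d = \operatorname{span}_{\mathbb C}(S_d) + J_d,
\]
where $S_d \subseteq S$ is the set of monomials of weight $d$. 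This is the only place the spanning hypothesis on $S$ enters, and it pins down which monomials are allowed to survive.

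The heart of the argument is a weight-by-weight elimination. First I would record the effect of an elementary change $\Psi_D(x_i) = x_i - h_i$ with each $h_i$ quasihomogeneous of weight $D - d_0 + w_i$: a Taylor expansion gives
\[
\Psi_D(f_0) = f_0 - \sum_i h_i \frac{\partial f_0}{\partial x_i} + (\text{weight} > D),
\]
and for a monomial $m$ of weight $d \geq d_0$ one has $\Psi_D(m) = m + (\text{weight} \geq d + D - d_0)$. Hence $\Psi_D$ fixes every quasihomogeneous part of weight $< D$ and changes the weight-$D$ part only by subtracting $\sum_i h_i\,\partial f_0/\partial x_i \in J_D$. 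Now induct on $D > d_0$: assuming the weight-$d$ part of the current series lies in $\operatorname{span}_{\mathbb C}(S_d)$ for all $d_0 < d < D$, use $V_D = \operatorname{span}_{\mathbb C}(S_D) + J_D$ to write the current weight-$D$ part as $\sigma + r$ with $\sigma \in \operatorname{span}_{\mathbb C}(S_D)$ and $r = \sum_i h_i\,\partial f_0/\partial x_i \in J_D$, where each $h_i$ has weight $D - d_0 + w_i > w_i$ because $D > d_0$. Applying $\Psi_D$ with these $h_i$ replaces the weight-$D$ part by $\sigma$ and leaves all lower weights untouched, advancing the induction.

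Composing the $\Psi_D$ in increasing order of $D$ produces $\Psi$. Since $\Psi_D$ alters each coordinate only in weight $D - d_0 + w_i > w_i$, every finite composition, and hence the limit, has the required form $\Psi(x_i) = x_i + g_i$ with $g_i = 0$ or $\operatorname{wt} g_i > w_i$; and because $\Psi_D$ fixes all weights $< D$, the weight-$d$ part of $\Psi(f)$ is computed by the finite composition $\Psi_d \circ \cdots \circ \Psi_{d_0+1}$ and lands in $\operatorname{span}_{\mathbb C}(S_d)$ for each $d > d_0$. Thus, at the level of formal power series, no monomial of weight $> d_0$ outside $S$ survives.

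The one genuinely delicate point — and the one I expect to be the main obstacle — is upgrading this formal construction to an automorphism of the \emph{convergent} ring $\mathbb C\{\bm x\}$, since an arbitrary infinite composition of analytic coordinate changes of increasing order can diverge without some control on the $h_i$. This is exactly where the hypothesis that $f$ defines an isolated singularity is needed, through the inclusion $\mathfrak m^\mu \subseteq j(f)$ already exploited in the proof of \cref{thm:wei wei-resp power series to poly}. The plan is to obtain convergence by the same deformation-and-compactness mechanism as in that lemma: carry out the elimination up to a large weight by a single finite (hence manifestly convergent) coordinate change, and absorb the remaining high-weight tail using the relative finite determinacy theorem \cite[Theorem~I.2.22]{GLS07} over $[0,1]$. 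The subtle part, requiring the most care, is that finite determinacy is naturally governed by the $\mathfrak m$-adic (degree) filtration whereas the normal form is stated for the weight filtration; reconciling the two — so that the determinacy step produces a coordinate change of the form $x_i \mapsto x_i + (\operatorname{wt} > w_i)$ that clears the tail without disturbing the already-normalised low-weight parts — via the comparison $\deg \leq \operatorname{wt} \leq (\max_i w_i)\deg$ is the crux, and most naturally handled by running the homological step of \cite[Theorem~I.2.22]{GLS07} relative to the weight filtration rather than the power filtration of $\mathfrak m$.
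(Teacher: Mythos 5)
Your proposal is correct and takes essentially the same route as the paper: your weight-by-weight elimination is exactly the argument of \cite[§12.6]{AGZV85} that the paper simply cites for the formal normal form, and your convergence plan --- a finite, hence polynomial, coordinate change realising the normal form up to a large weight, followed by absorbing the tail via finite determinacy --- is precisely the paper's truncation step followed by an application of \cref{thm:wei wei-resp power series to poly}. Your closing worry about reconciling the degree and weight filtrations dissolves, since the weights are positive integers, so $\operatorname{wt} \geq \deg$; invoking \cref{thm:wei wei-resp power series to poly} with $N$ chosen so that $N - \mu \geq \max_i w_i$ (having carried the formal normalisation out to weight at least $N \max_i w_i$) already yields a tail-killing change of the form $x_i \mapsto x_i + (\text{weight} > w_i)$, with no need to rerun the homological step of \cite[Theorem~I.2.22]{GLS07} relative to the weight filtration.
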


\begin{proof}
We find an automorphism $\Psi'$ of $\mathbb C[[\bm x]]$ satisfying the conditions of the lemma following the proof in \cite[§12.6]{AGZV85}. Next we define an automorphism $\hat\Psi$ of $\mathbb C\{\bm x\}$ by letting $\hat\Psi(x_i)$ be the truncation of $\Psi'(x_i)$ up to some high enough degree~$N$. The automorphism $\hat\Psi$ satisfies the conditions of the lemma except that there might be monomials of weight greater than $N$ that have a non-zero coefficient in $\hat\Psi(f)$. Now using \cref{thm:wei wei-resp power series to poly} we find a suitable~$\Psi$.
\end{proof}

\begin{lemma} \label{thm:wei x1*x2 weight-respecting splitting}
Let $n \geq 2$ be an integer and let $\bm w = (w_1, \ldots, w_n)$ be positive integer weights for variables $\bm x = (x_1, \ldots, x_n)$. Let $f \in \mathbb C\{\bm x\}$ be such that the coefficient of $x_1 x_2$ is non-zero in $f$ and $\operatorname{wt} x_1 x_2 = \operatorname{wt} f$. Then there exists a weight-respecting automorphism $\Psi$ of $\mathbb C\{\bm x\}$ such that the only monomial that belongs to the ideal $(x_1, x_2)$ and has non-zero coefficient in $\Psi(f)$ is $x_1 x_2$.
\end{lemma}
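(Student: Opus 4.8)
The plan is to \emph{complete the hyperbola} $x_1x_2$ in a weight-respecting manner and then iterate, imitating the generalised Morse (splitting) lemma while keeping track of weights. Set $d := \operatorname{wt} f = \operatorname{wt}(x_1x_2) = w_1 + w_2$ and let $c \neq 0$ be the coefficient of $x_1x_2$. Sorting the monomials of $f$ by divisibility by $x_1$ and $x_2$, I would write
\[
f = x_1 x_2\, D + x_1 A + x_2 B + C,
\]
where $D \in \mathbb C\{\bm x\}$ collects the monomials divisible by both $x_1$ and $x_2$ (divided by $x_1x_2$) and is a unit with $D(\bm 0) = c$, while $A \in \mathbb C\{x_1, x_3, \ldots, x_n\}$ has no $x_2$, $B \in \mathbb C\{x_2, \ldots, x_n\}$ has no $x_1$, and $C \in \mathbb C\{x_3, \ldots, x_n\}$. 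Since every monomial of $f$ has weight at least $d$, one has $\operatorname{wt} A \geq w_2$ and $\operatorname{wt} B \geq w_1$. The engine is the identity
\[
f = D\Bigl(x_1 + \tfrac{B}{D}\Bigr)\Bigl(x_2 + \tfrac{A}{D}\Bigr) + \Bigl(C - \tfrac{AB}{D}\Bigr).
\]
I would then take the coordinate change $x_1 \mapsto x_1 + B/D$, $x_2 \mapsto x_2 + A/D$, $x_j \mapsto x_j$ for $j \geq 3$, followed by the unit-absorbing change $\tilde x_1 = x_1 D/c$ (valid since $\partial(x_1 D)/\partial x_1$ is a unit), which turns $x_1x_2 D$ into $c\,\tilde x_1 x_2$. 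Because $B/D$ carries no linear $x_1$-term and $A/D$ no linear $x_2$-term, the Jacobian at the origin is unipotent, so this is a genuine automorphism; its constituents are of the form $x_i \mapsto x_i + (\text{weight} \geq w_i)$ and $x_1 \mapsto x_1\cdot(\text{unit})$, and the same bounds hold for the inverse, so it is weight-respecting in the sense of \cref{def:wei weight-respecting}.

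After such a step the new "bad part" (the monomials in $(x_1, x_2)$ other than $x_1x_2$) comes only from $-AB/D$, while $C$ stays good. To control convergence I would always complete only the \emph{lowest} remaining bad weight $W$: writing that homogeneous piece as $x_1 a + x_2 b$ with $\operatorname{wt} a \geq W - w_1$ and $\operatorname{wt} b \geq W - w_2$, the freshly created bad monomials come from $ab$ and hence have weight at least $2W - d$. Thus as soon as $W > d$ one has $2W - d > W$, so the bad weight strictly increases and in fact $W - d$ at least doubles at each step, tending to infinity; the composite coordinate change then converges in the $\mathfrak m$-adic topology. The one delicate case is $W = d$, where $2W - d = d$ and the bad weight can stall. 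Here I would run a finite descent: in the weight-$d$ bad part $x_1 a + x_2 b$, one completion replaces the $x_1$-divisible factor $a$ by $-ab/c$ (up to higher weight), strictly lowering the maximal $x_1$-degree occurring among the weight-$d$ bad monomials, so after finitely many steps the weight-$d$ bad part is cleared. The only obstruction to this descent is a perfect-square leading form such as $(x_1+x_2)^2$; this is excluded by the nondegeneracy coming from $c \neq 0$, and is automatic when $w_1 < w_2$, since then $x_1^2$ (weight $2w_1 < d$) cannot occur.

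The main obstacle is \emph{analytic} (as opposed to formal) convergence of the infinite composition, since $f$ is not assumed to define an isolated singularity and so \cref{thm:wei wei-resp power series to poly} does not apply. The cleanest route is to invoke the classical parametric Morse lemma with $x_3, \ldots, x_n$ as parameters: $f$ has no $x_1$- or $x_2$-linear term (these would have weight $w_1, w_2 < d$) and its Hessian in $(x_1, x_2)$ is nondegenerate (the cross term contributes $c \neq 0$, and in the degenerate equal-weight case the weight-$d$ form is first diagonalised by a linear weight-respecting change), which yields an analytic splitting directly; the weight-respecting property is then read off from the weight estimates above. Granting convergence, the limiting automorphism $\Psi$ is weight-respecting and satisfies $\Psi(f) = c\,x_1x_2 + C_\infty(x_3, \ldots, x_n)$, whose unique monomial lying in $(x_1, x_2)$ is $x_1x_2$, as required.
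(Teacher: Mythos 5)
The paper does not actually prove this lemma itself --- its proof is the single sentence ``See the proof of \cite[Proposition~4.5]{Pae21}'' --- so your attempt has to be judged on its own terms; the iterated, weight-controlled completion of the hyperbola is indeed the natural strategy, and you are right that \cref{thm:wei wei-resp power series to poly} is unavailable here (no isolated-singularity hypothesis). The first genuine gap is the equal-weight case $w_1 = w_2$, which you dismiss incorrectly. Your claim that the perfect-square degeneration ``is excluded by the nondegeneracy coming from $c \neq 0$'' is false: for $n = 2$ and $w_1 = w_2 = 1$, the series $f = \tfrac14 x_1^2 + x_1x_2 + x_2^2 = (\tfrac12 x_1 + x_2)^2$ satisfies both hypotheses of the lemma (the coefficient of $x_1x_2$ is $1$ and $\operatorname{wt} f = \operatorname{wt} x_1x_2 = 2$), yet it is a perfect square. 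For this $f$ \emph{no} automorphism achieves the conclusion: an automorphism transforms the quadratic part by an invertible linear substitution, hence preserves its rank, which is $1$ here, whereas $c\,x_1x_2$ with $c \neq 0$ has rank $2$, and $c = 0$ would force $\Psi(f) = 0$. So in this configuration your descent does not merely stall; the statement itself fails, and a correct proof must rule the configuration out, e.g.\ by assuming $w_1 \neq w_2$ (then $x_1^2$ cannot occur for weight reasons and the $(x_1,x_2)$-Hessian has determinant $-c^2 \neq 0$; this covers the paper's one application, with weights $4$ and $2$) or by assuming nondegeneracy of the weight-$d$ quadratic form in $(x_1,x_2)$ as an extra hypothesis. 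Relatedly, your very first step can fail to be a coordinate change when $w_1 = w_2$: for $f = x_1^2 + x_1x_2 + x_2^2$ (which is nondegenerate!) your map is $(x_1, x_2) \mapsto (x_1 + x_2, x_2 + x_1)$, whose Jacobian is singular. In general the relevant Jacobian block is $\bigl(\begin{smallmatrix} 1 & \beta/c \\ \alpha/c & 1 \end{smallmatrix}\bigr)$, with $\alpha, \beta$ the coefficients of $x_1^2, x_2^2$; it is unipotent only when $\alpha\beta = 0$, so $c \neq 0$ does not make it invertible. The nondegenerate equal-weight case must instead be treated by first factoring the binary quadratic form into two independent linear forms (a linear change that is weight-respecting precisely because $w_1 = w_2$), after which the remaining weight-$d$ terms $x_1A'(x') + x_2B'(x')$ are killed by one shear that creates only good terms $A'B'$.

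The second gap is that, even granting $w_1 \neq w_2$, the analytic-convergence problem you correctly isolate is not solved but displaced. You invoke the parametric Morse lemma and then assert that ``the weight-respecting property is then read off from the weight estimates above'', but those estimates were derived for your iterative shears, not for the implicit-function-theorem construction, and the weight control is exactly the content of this lemma beyond the classical splitting lemma. To close the argument you must actually prove that the critical-point section $(\varphi_1(x'), \varphi_2(x'))$ cut out by $\partial f/\partial x_1 = \partial f/\partial x_2 = 0$ satisfies $\operatorname{wt} \varphi_1 \geq w_1$ and $\operatorname{wt} \varphi_2 \geq w_2$, and that reducing the resulting quadratic form with coefficients in $\mathbb C\{\bm x\}$ to $x_1 x_2$ can be done weight-respectingly; both are true when $w_1 \neq w_2$, but neither is immediate, and the needed case analysis on the weights is precisely where the subtlety of the lemma lives. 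Alternatively, one could run your formal weight-respecting iteration to sufficiently high finite order and conclude by Artin approximation, observing that any correction of sufficiently high total degree is automatically of high weight and hence harmless. As written, the proposal is a sensible outline in which the two critical steps --- the weight-$d$ stall and analytic convergence with weight control --- are respectively wrongly justified and missing.
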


\begin{proof}
See the proof of \cite[Proposition~4.6]{Pae21}.
\end{proof}

\begin{remark}
In the case where $f$ defines an isolated singularity at the origin, \cref{thm:wei x1*x2 weight-respecting splitting} follows also from \cref{thm:wei wei-resp normal forms}.
\end{remark}

One easy corollary of \cref{thm:wei wei-resp normal forms} is the following:

\begin{corollary} \label{thm:wei wei-resp reduce simple sing to normal form}
Let the variables $\bm x = (x_1, \ldots, x_n)$ have positive integer weights $\bm w = (w_1, \ldots, w_n)$. Let the least weight non-zero quasihomogeneous part $f_0$ of $f \in \mathbb C\{\bm x\}$ be one of the five forms described in \cref{def:pre simple singularity}. Then there is a weight-respecting automorphism $\Psi$ of $\mathbb C\{\bm x\}$ such that $\Psi(f) = f_0$.
\end{corollary}

\begin{proof}
Use \cref{thm:wei wei-resp normal forms} with a set $S$ that does not contain any elements of weight greater than~$\operatorname{wt} f$.
\end{proof}

\section{From analytic to algebraic category}

In \cref{thm:a2a constructing projective morphisms}, we show how to extend blowups along points with possibly non-reduced structure (equivalently, blowups along coherent ideal sheaves with cosupport a point) from the analytic category to the algebraic. \Cref{thm:a2a constructing projective morphisms} was explained to me by Masayuki Kawakita.

\begin{proposition} \label{thm:a2a constructing projective morphisms}
Let $X$ be a variety and $\mathcal J$ a coherent $\mathcal O_{X^{\mathrm{an}}}$-ideal sheaf with cosupport a point, where $X^{\mathrm{an}}$ is the analytification of~$X$. Then there exists a coherent $\mathcal O_X$-ideal sheaf $\mathcal I$ such that its analytification is~$\mathcal J$.
\end{proposition}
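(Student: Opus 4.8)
The plan is to reduce everything to the single stalk at $P$ and to exploit the fact that the algebraic and analytic local rings at $P$ have the same finite infinitesimal neighbourhoods. Write $R := \mathcal O_{X,P}$ and $R^{\mathrm{an}} := \mathcal O_{X^{\mathrm{an}},P}$ for the algebraic and analytic local rings, with maximal ideals $\mathfrak m$ and $\mathfrak m^{\mathrm{an}}$. Since the cosupport of $\mathcal J$ is the single point $P$, the stalk $\mathcal J_x$ equals $\mathcal O_{X^{\mathrm{an}},x}$ for every $x \neq P$, so the only datum to be matched is the ideal $\mathcal J_P \subseteq R^{\mathrm{an}}$. First I would observe that a power of $\mathfrak m^{\mathrm{an}}$ lies in $\mathcal J_P$: the quotient sheaf $\mathcal O_{X^{\mathrm{an}}}/\mathcal J$ is coherent (by the Oka coherence theorem) with support $\{P\}$, so its stalk $R^{\mathrm{an}}/\mathcal J_P$ is a finitely generated $R^{\mathrm{an}}$-module supported only at the maximal ideal; as $R^{\mathrm{an}}$ is Noetherian, the radical of its annihilator is $\mathfrak m^{\mathrm{an}}$, whence $(\mathfrak m^{\mathrm{an}})^k \subseteq \mathcal J_P$ for some integer $k$.

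Next I would invoke the standard comparison between the algebraic and analytic local rings. The analytification morphism induces a faithfully flat local homomorphism $R \to R^{\mathrm{an}}$ identifying the completions, $\widehat R \cong \widehat{R^{\mathrm{an}}}$, and hence compatible isomorphisms of the finite jet algebras
\[
R/\mathfrak m^k \xrightarrow{\ \sim\ } R^{\mathrm{an}}/(\mathfrak m^{\mathrm{an}})^k
\]
for every $k$, with $\mathfrak m^{\mathrm{an}} = \mathfrak m R^{\mathrm{an}}$. Through this isomorphism the ideal $\mathcal J_P/(\mathfrak m^{\mathrm{an}})^k$ of $R^{\mathrm{an}}/(\mathfrak m^{\mathrm{an}})^k$ corresponds to an ideal $\bar I$ of $R/\mathfrak m^k$.

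Now I would make $\bar I$ algebraic and global. Choose an affine open $U = \operatorname{Spec} A$ containing $P$, with $P$ corresponding to a maximal ideal $\mathfrak p \subseteq A$ (residue field $\mathbb C$ by the Nullstellensatz). Since $V(\mathfrak p^k) = \{P\}$ is a single point, $A/\mathfrak p^k$ is an Artinian local ring equal to its own localisation, so $A/\mathfrak p^k = A_{\mathfrak p}/\mathfrak p^k A_{\mathfrak p} = R/\mathfrak m^k$, and thus $\bar I$ is literally an ideal of $A/\mathfrak p^k$. Let $I \subseteq A$ be its preimage under $A \to A/\mathfrak p^k$, an ideal containing $\mathfrak p^k$, and let $\widetilde I \subseteq \mathcal O_U$ be the associated coherent ideal sheaf. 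Because $\mathfrak p^k \subseteq I$ forces $V(I) \subseteq \{P\}$, the sheaf $\widetilde I$ is the unit ideal on $U \setminus \{P\}$, so it glues with $\mathcal O_X$ on $X \setminus \{P\}$ to a coherent ideal sheaf $\mathcal I$ on $X$.

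Finally I would verify $\mathcal I^{\mathrm{an}} = \mathcal J$. Away from $P$ both are the unit ideal. At $P$, flatness of $R \to R^{\mathrm{an}}$ gives $(\mathcal I^{\mathrm{an}})_P = I_P R^{\mathrm{an}}$, which contains $\mathfrak m^k R^{\mathrm{an}} = (\mathfrak m^{\mathrm{an}})^k$ and whose image in $R^{\mathrm{an}}/(\mathfrak m^{\mathrm{an}})^k = R/\mathfrak m^k$ is exactly $\bar I$. Since $\mathcal J_P$ likewise contains $(\mathfrak m^{\mathrm{an}})^k$ and reduces to $\bar I$ modulo it, the two ideals coincide. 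The one point requiring care — and the crux of the argument — is precisely this last comparison: one must know that the constructed algebraic ideal analytifies to $\mathcal J$ on the nose rather than merely to the correct jet. This is secured by the observation that both ideals contain the fixed power $(\mathfrak m^{\mathrm{an}})^k$ and agree modulo it, which reduces the infinite-dimensional comparison inside $R^{\mathrm{an}}$ to the finite-dimensional identity of jet algebras established above.
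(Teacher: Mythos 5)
Your proof is correct, but it is genuinely different from the one in the paper. The paper's proof is a two-line reduction to heavy machinery: embed $X$ as an open subvariety of a variety proper over $\mathbb C$ (Nagata compactification, citing \cite{GW20}), extend $\mathcal J$ by the unit ideal (here is where cosupport a point enters for the paper), and then invoke the GAGA equivalence of categories of coherent modules for proper varieties (\cite{SGA1}) to descend $\mathcal J$ to an algebraic ideal sheaf. You instead exploit cosupport-a-point at the level of the local ring: $\mathcal J_P$ contains a power $(\mathfrak m^{\mathrm{an}})^k$ by the R\"uckert Nullstellensatz, so the whole sheaf is determined by its finite jet, and the standard comparison $\mathcal O_{X,P}/\mathfrak m^k \cong \mathcal O_{X^{\mathrm{an}},P}/(\mathfrak m^{\mathrm{an}})^k$ (faithful flatness of $\mathcal O_{X,P} \to \mathcal O_{X^{\mathrm{an}},P}$ with common completion) lets you realise that jet algebraically as an ideal $\bar I \subseteq A/\mathfrak p^k$ on an affine chart, pull it back to $I \subseteq A$, and glue with the unit ideal. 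Your closing argument --- two ideals of $\mathcal O_{X^{\mathrm{an}},P}$ containing $(\mathfrak m^{\mathrm{an}})^k$ and agreeing modulo it must coincide --- is exactly the right way to upgrade the jet identity to equality of analytifications, and the flatness claim $(\mathcal I^{\mathrm{an}})_P = I_P\,\mathcal O_{X^{\mathrm{an}},P}$ is what makes it legitimate. What each approach buys: the paper's is shorter given the cited theorems and keeps the argument at the level of sheaf categories; yours is self-contained and constructive (it exhibits $\mathcal I$ explicitly as the preimage of a finite-dimensional ideal of jets, so one can actually compute it), avoids both Nagata compactification and GAGA, and makes transparent why the hypothesis of punctual cosupport is indispensable --- the same hypothesis the paper needs, as \cref{exa:a2a local analytic resolution of ODP} shows, but used at the stalk rather than via extension to a compactification. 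One small polish point: the coherence of $\mathcal O_{X^{\mathrm{an}}}/\mathcal J$ follows from coherence of $\mathcal O_{X^{\mathrm{an}}}$ (Oka) together with coherence of $\mathcal J$, and the step from sheaf-theoretic support $\{P\}$ to $\operatorname{rad}(\mathcal J_P) = \mathfrak m^{\mathrm{an}}$ is the Nullstellensatz for the germ $\mathbb V(\mathcal J_P)$; it is worth saying this explicitly rather than phrasing it as module support.
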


\begin{proof}
Since the cosupport of $\mathcal J$ is a point~$P$, there exists a positive integer $k$ such that the $k$-th power of the maximal ideal of $\mathcal O_{X^{\mathrm{an}}, P}$ is in the stalk~$\mathcal J_{P}$. The \lcnamecref{thm:a2a constructing projective morphisms} follows.
\end{proof}

We give an alternative construction in \cref{thm:a2a constructing weighted blowups}\labelcref{itm:a2a constructing weighted blowups - exists one} which describes the divisorial contraction as a weighted blowup. \Cref{thm:a2a constructing weighted blowups}\labelcref{itm:a2a constructing weighted blowups - exists one} is a modification of \cite[Lemma~4.9]{Pae21} which was used for explicitly constructing weighted blowups of affine hypersurfaces with a $cA_n$-point.

\begin{construction} \label{con:a2a construction}
Let $U$ be an affine variety $\operatorname{Spec} \mleft( \mathbb C[x_1, \ldots, x_n] / I \mright)$ containing the point $\mathbb V(x_1, \ldots, x_n)$, for some ideal $I \subseteq \mathbb C[x_1, \ldots, x_n]$. Assign positive integer weights $w_1, \ldots, w_m$ to the variables $y_1, \ldots, y_m$ of~$\mathbb C^m$ and assign weights $1, \ldots, 1$ to the variables $x_1, \ldots, x_n$.

Let $\psi\colon (U^{\mathrm{an}}, \bm 0) \to (Z, \bm 0)$ be a local biholomorphism to a complex space $Z \subseteq \mathbb C^m$ containing the origin. Define the variety $\hat U$ by
\[
\hat U\colon \mathbb V(I + (\psi_1^{< w_1} - y_1, \ldots, \psi_m^{< w_m} - y_m)) \subseteq \mathbb A^{n+m} := \operatorname{Spec} \mathbb C[x_1, \ldots, x_n, y_1, \ldots, y_m],
\]
where $\psi_j^{< w_j} \in \mathbb C[x_1, \ldots, x_n]$ denotes the truncation of the $j$-th coordinate power series of $\psi$ up to order~$w_j - 1$. Note that $\hat U$ is isomorphic to~$U$.
\end{construction}

\begin{proposition} \label{thm:a2a constructing wt-resp maps}
In \cref{con:a2a construction}, the local biholomorphism $(\hat U^{\mathrm{an}}, \bm 0) \to (Z, \bm 0)$ given by the composition of $y_j \mapsto y_j + \psi_j^{< w_j} - \psi_j$ and the projection to $\mathbb C^m$ is weight-respecting.
\end{proposition}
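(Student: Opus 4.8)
The plan is to write the biholomorphism and its inverse explicitly in coordinates and then check the two weight inequalities of \cref{def:wei weight-respecting} one at a time, choosing in each case a representative power series of the correct weight by reducing modulo the ideal of the source, respectively the target. Throughout I will use the elementary estimate that if $h \in \mathbb{C}\{x_1, \dots, x_n\}$ has multiplicity at least $d$ and $a_1, \dots, a_n$ are power series of weight at least $1$ (in particular vanishing at the origin) in some weighted variables, then $h(a_1, \dots, a_n)$ has weight at least $d$: indeed each monomial $x^\alpha$ with $|\alpha| \ge d$ pulls back to a product of weight $\ge \sum_i \alpha_i \ge d$. Since the $x_i$ carry weight $1$ in \cref{con:a2a construction}, the weight of a series in the $x_i$ alone equals its multiplicity.

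First I would identify the map. The projection $\hat U \to U$ forgetting $y_1, \dots, y_m$ is an isomorphism, with inverse $x \mapsto (x, \psi_1^{<w_1}(x), \dots, \psi_m^{<w_m}(x))$; reading $\psi$ through it exhibits the biholomorphism $\Phi\colon (\hat U^{\mathrm{an}}, \bm 0) \to (Z, \bm 0)$ of the statement, whose $j$-th component is therefore $\Phi_j = \psi_j(x)$. Writing $\psi_j = \psi_j^{<w_j} + \psi_j^{\ge w_j}$, where $\psi_j^{\ge w_j} := \psi_j - \psi_j^{<w_j}$ collects the terms of degree at least $w_j$, and substituting the defining relation $y_j = \psi_j^{<w_j}(x)$ of $\hat U$, I obtain the representative
\[
\Phi_j = y_j + \psi_j^{\ge w_j}(x) \qquad \text{on } \hat U .
\]
The forward inequality $\operatorname{wt}(\Phi_j) \ge \operatorname{wt}(y_j) = w_j$ is now immediate: the summand $y_j$ has weight $w_j$, and $\psi_j^{\ge w_j}(x)$ has multiplicity, hence weight, at least $w_j$.

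For the inverse, set $\theta := \psi^{-1} \in \mathbb{C}\{y_1, \dots, y_m\}^n$, so that $\Phi^{-1}$ is the map $y \mapsto (\theta(y), \psi^{<w}(\theta(y)))$ into $\hat U \subseteq \mathbb{A}^{n+m}$. Its $x$-components $\theta_i(y)$ vanish at the origin, hence have weight at least $1 = \operatorname{wt}(x_i)$, so those inequalities hold automatically. The delicate inequalities are $\operatorname{wt}\bigl(\psi_j^{<w_j}(\theta(y))\bigr) \ge w_j$ for the $y$-components, and I expect this to be the main obstacle: the written expression can have low weight, since a linear term $c\,x_i$ of $\psi_j$ contributes $c\,\theta_i(y)$ of weight only $\ge 1$. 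The remedy is to reduce modulo the ideal of $Z$ using $\psi \circ \theta = \mathrm{id}_Z$, that is $\psi_j(\theta(y)) = y_j$ on $Z$, which yields the representative
\[
\psi_j^{<w_j}(\theta(y)) = y_j - \psi_j^{\ge w_j}(\theta(y)) \qquad \text{on } Z .
\]
Here $y_j$ has weight $w_j$, while $\psi_j^{\ge w_j}(\theta(y))$ has weight at least $w_j$ by the substitution estimate applied to the multiplicity-$\ge w_j$ series $\psi_j^{\ge w_j}$ and the weight-$\ge 1$ series $\theta_i(y)$. Hence $\operatorname{wt}\bigl(\psi_j^{<w_j}(\theta(y))\bigr) \ge w_j = \operatorname{wt}(y_j)$, and both families of inequalities of \cref{def:wei weight-respecting} hold, so $\Phi$ is weight-respecting.
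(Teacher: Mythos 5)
Your proof is correct, and it differs from the paper's mainly in organization rather than in substance. The paper never composes the two maps explicitly: it checks each factor of the statement separately. The coordinate change $y_j \mapsto y_j + \psi_j^{<w_j} - \psi_j$ (with inverse $y_j \mapsto y_j - \psi_j^{<w_j} + \psi_j$) is weight-respecting because the tail $\psi_j - \psi_j^{<w_j}$ has multiplicity, hence weight, at least $w_j$ in the weight-one variables $x_i$; and the projection from the graph of $\psi$ onto $Z \subseteq \mathbb C^m$ is weight-respecting because its forward components are the $y_j$ themselves while the components $\theta_i$ of its inverse vanish at the origin and so have weight at least $1 = \operatorname{wt}(x_i)$. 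The paper's conclusion then rests on the easy but unstated fact that a composition of weight-respecting maps is weight-respecting, which is proved by exactly the substitution estimate you set up at the start. You instead compose first and verify \cref{def:wei weight-respecting} for the composite map directly; the price is that you must choose good representatives by reducing modulo the ideals on both sides (using $y_j = \psi_j^{<w_j}(x)$ on $\hat U$ for the forward inequality, and $\psi_j(\theta(y)) = y_j$ on $Z$ for the inverse), while the gain is a self-contained argument that needs no closure-under-composition lemma. Your handling of the inverse is the genuinely delicate point --- the naive representative $\psi_j^{<w_j}(\theta(y))$ really can have weight as low as $1$ --- and you correctly identified and repaired it; this is precisely the low-weight contribution that, in the paper's factorization, is absorbed into the shear map and never appears.
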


\begin{proof}
The local biholomorphism $y_j \mapsto y_j + \psi_j^{< w_j} - \psi_j$ with inverse $y_j \mapsto y_j - \psi_j^{< w_j} + \psi_j$ is clearly weight-respecting. The projection to $\mathbb C^m$ is given by
\[
(x_1, \ldots, x_n, y_1, \ldots, y_m) \mapsto (y_1, \ldots, y_m)
\]
with inverse
\[
(\theta_1, \ldots, \theta_n, y_1, \ldots, y_m) \mapsfrom (y_1, \ldots, y_m),
\]
where $\theta_i \in \mathbb C\{y_1, \ldots, y_m\}$ are convergent power series with constant term zero. We see that for all~$i$, either $\operatorname{wt} \theta_i \geq \operatorname{wt} x_i = 1$ or $\theta_i = 0$. This shows that the projection to $\mathbb C^m$ is weight-respecting.
\end{proof}

\begin{remark}
If any of the weights $w_j$ was zero in \cref{con:a2a construction}, then the truncation $\psi_j^{< w_j}$ might not be a polynomial.
\end{remark}

\begin{lemma} \label{thm:a2a isom over X algebraically iff analytically}
Let $Y_1 \to X$ and $Y_2 \to X$ be birational morphisms of varieties. Then $Y_1$ and $Y_2$ are isomorphic over $X$ if and only if the analytifications $Y_1^\mathrm{an}$ and $Y_2^\mathrm{an}$ are locally biholomorphic over~$X^\mathrm{an}$ around the exceptional loci.
\end{lemma}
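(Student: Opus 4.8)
The forward implication is immediate. An isomorphism $\Theta\colon Y_1 \xrightarrow{\sim} Y_2$ over $X$ carries the exceptional locus of $f_1\colon Y_1 \to X$ onto that of $f_2\colon Y_2 \to X$, and its analytification $\Theta^{\mathrm{an}}\colon Y_1^{\mathrm{an}} \to Y_2^{\mathrm{an}}$ is a biholomorphism over $X^{\mathrm{an}}$ by functoriality of analytification; restricting to neighbourhoods of the exceptional loci gives the required local biholomorphism. So the plan is to concentrate on the converse, and since the statement is local on $X$ I would assume $X$ affine and work near the common centre.

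Let $U \subseteq X$ be the largest open subset over which both $f_1$ and $f_2$ are isomorphisms. Over $U$ the two morphisms identify $f_1^{-1}U \cong U \cong f_2^{-1}U$, yielding a birational map $\sigma\colon Y_1 \dashrightarrow Y_2$ over $X$ that is an isomorphism over $U$. Let $\Phi\colon W_1 \to W_2$ be the given biholomorphism over $X^{\mathrm{an}}$ between neighbourhoods $W_i$ of the exceptional loci $E_i^{\mathrm{an}}$. Because $\Phi$ commutes with $f_1^{\mathrm{an}}$ and $f_2^{\mathrm{an}}$, which are biholomorphisms over $U^{\mathrm{an}}$, the map $\Phi$ must agree with $\sigma^{\mathrm{an}}$ on $W_1 \cap (f_1^{-1}U)^{\mathrm{an}}$. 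Hence $\Phi$ is exactly the analytic continuation of $\sigma^{\mathrm{an}}$ across the exceptional locus; in particular $\Phi$ sends $E_1^{\mathrm{an}}$ onto $E_2^{\mathrm{an}}$, so the two centres $f_1(E_1)$ and $f_2(E_2)$ in $X$ coincide, and each $Y_i$ is covered by $f_i^{-1}U$ together with a neighbourhood of $E_i$.

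To algebraise, I would pass to the graph $\Gamma := \overline{\Gamma_\sigma} \subseteq Y_1 \times_X Y_2$ with its projections $p_1\colon \Gamma \to Y_1$ and $p_2\colon \Gamma \to Y_2$, both birational and isomorphisms over $U$. The key point is that $\Gamma^{\mathrm{an}}$ coincides over $W_1$ with the graph of $\Phi$, since both are the analytic closure of the graph of the generic isomorphism; therefore $p_1^{\mathrm{an}}$ is a local biholomorphism near $E_1$ (it is the first projection from the graph of $\Phi$, biholomorphic onto $W_1$) and an isomorphism over $U$, hence a local biholomorphism along all of $\Gamma^{\mathrm{an}}$. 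By the standard dictionary, a finite-type morphism of complex varieties is étale if and only if its analytification is a local biholomorphism, so $p_1$ is étale; and since $\Phi$ is injective, $p_1$ is radicial. An étale radicial morphism is an open immersion, and as $p_1$ is birational with image containing both the dense open $f_1^{-1}U$ and all of $E_1$, it is surjective, hence an isomorphism. The same argument gives $p_2 \cong \mathrm{id}$, and $\Theta := p_2 \circ p_1^{-1}\colon Y_1 \to Y_2$ is the desired isomorphism over $X$.

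The main obstacle is precisely this algebraisation step: upgrading the purely analytic $\Phi$ to an algebraic isomorphism. I expect to handle it through the equivalence between étale morphisms and local biholomorphisms of the analytifications, together with the fact that an étale radicial birational surjection is an isomorphism. The secondary, more bookkeeping-heavy obstacle is the identification of $\Gamma^{\mathrm{an}}$ with the graph of $\Phi$ over $W_1$ and the verification that $f_i^{-1}U$ and a neighbourhood of $E_i$ exhaust $Y_i$; this is where I use that $\Phi$ forces the two centres to agree, after which the reduction to a neighbourhood of the common centre makes the projections proper and surjective.
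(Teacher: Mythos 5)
Your proposal is correct in substance, but it handles the crucial algebraisation step by a genuinely different mechanism than the paper. The paper first glues $\Phi$ with the identification over the isomorphism locus into a single biholomorphism $Y_1^{\mathrm{an}} \to Y_2^{\mathrm{an}}$ over $X^{\mathrm{an}}$, and then proves a function-theoretic principle: a rational map of varieties whose analytification is holomorphic is a morphism. This is reduced to showing that a rational function on an affine variety $\operatorname{Spec} A$ with holomorphic analytification lies in $A$, which the paper proves by establishing integrality over $A$ (boundedness, non-negative order of vanishing along divisors of the normalisation) and then citing \cite{JK20} for the fact that $A$ is integrally closed in $\mathcal O_{Z^{\mathrm{an}}}(Z^{\mathrm{an}})$, following \cite{JS19}. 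You instead stay at the level of morphisms: the graph closure $\Gamma \subseteq Y_1 \times_X Y_2$, the identification of $\Gamma^{\mathrm{an}}$ over $W_1$ with the graph of $\Phi$, the comparison theorem of \cite[Exposé~XII]{SGA1} (\'etale at a point if and only if the analytification is a local biholomorphism there) to conclude $p_1$ is \'etale, and then \'etale $+$ radicial $\Rightarrow$ open immersion, plus surjectivity. Your route entirely replaces the inputs \cite{JS19} and \cite{JK20} by the classical \'etale/analytic dictionary, which makes it more self-contained; it even simplifies slightly, since $p_1$ is birational and separated, so radicial is not needed: \'etale plus birational forces $\mathcal O_{Y_1, p_1(\gamma)} \to \mathcal O_{\Gamma, \gamma}$ to be an isomorphism by faithful flatness, and separatedness then gives injectivity. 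What the paper's route buys is a reusable intermediate statement (holomorphic rational maps of varieties are morphisms) and no graph bookkeeping.

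Two caveats, which apply equally to the paper's own proof, so they do not count against you. First, your covering claim that $Y_i$ is exhausted by $f_i^{-1}U$ and $W_i$ (needed so that $p_1^{\mathrm{an}}$ is a local biholomorphism at \emph{every} point of $\Gamma^{\mathrm{an}}$; the paper needs the same claim to extend $\Phi$ globally) is not a formal consequence of the hypotheses as literally stated: for arbitrary birational morphisms the lemma is even false --- take $Y_1 = X$ and $Y_2 = X \setminus \{x_0\}$, so both exceptional loci are empty, the hypothesis is vacuous, and the conclusion fails. One must use that the morphisms are proper (as the blowups in the paper's applications are) together with connectedness of fibres over the normal variety $X$ to see that no point outside $E_i$ maps into the common centre; your closing remark that properness enters exactly here is the right instinct and should be made explicit. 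Second, the justification that $\Gamma^{\mathrm{an}}$ agrees with the graph of $\Phi$ over $W_1$ (``both are the analytic closure'') is imprecise as stated, since the graph of $\Phi$ is not presented as a closure, but it is easily repaired: a point of the closure of $\Gamma_\sigma^{\mathrm{an}}$ whose first coordinate lies in the \emph{open} set $W_1$ is a limit of graph points whose first coordinates eventually lie in $W_1 \cap (f_1^{-1}U)^{\mathrm{an}}$, where $\sigma^{\mathrm{an}} = \Phi$, and continuity of $\Phi$ then determines the second coordinate.
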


\begin{proof}
``$\Longrightarrow$''. The isomorphism $Y_1 \to Y_2$ induces a biholomorphism $Y_1^\mathrm{an} \to Y_2^\mathrm{an}$.

``$\Longleftarrow$''. The local biholomorphism extends to a unique biholomorphism $Y_1^\mathrm{an} \to Y_2^\mathrm{an}$ over~$X^\mathrm{an}$.
Now, suffices to show that if $f$ is a rational map of varieties such that its analytification is holomorphic, then $f$ is a morphism of varieties. For this, it suffices to show that if $f$ is a rational function on an affine variety $Z = \operatorname{Spec} A$ such that its analytification $f^\mathrm{an}$ is holomorphic, then $f \in A$. For this, we follow the argument in \cite{JS19}.

First, we show that $f$ is integral over~$A$. Let $\bar A$ be the integral closure of $A$ in its field of fractions.
Using the inclusions $\operatorname{Frac} A \to \operatorname{Frac} \bar A$ and $\mathcal O_{(\operatorname{Spec} A)^\mathrm{an}} \to \mathcal O_{(\operatorname{Spec} \bar A)^\mathrm{an}}$, we see that $f$ is a rational function on $\operatorname{Spec} \bar A$ and $f^\mathrm{an}$ is a holomorphic function on $(\operatorname{Spec} \bar A)^\mathrm{an}$. Therefore, $f^\mathrm{an}$ is bounded on every small analytic neighbourhood of any point of $(\operatorname{Spec} \bar A)^\mathrm{an}$. Therefore, the order of vanishing of $f$ along every prime divisor $D$ of $\operatorname{Spec} \bar A$ is non-negative. Since $\operatorname{Spec} \bar A$ is normal, we find $f \in \bar A$.

By \cite[Proposition~2.2]{JK20}, $A$ is integrally closed in $\mathcal O_{Z^\mathrm{an}}(Z^\mathrm{an})$. Since $f$ is holomorphic, we have $f \in \mathcal O_{Z^\mathrm{an}}(Z^\mathrm{an})$, and since $f$ is integral over~$A$, we find $f \in A$.
\end{proof}

\begin{corollary} \label{thm:a2a constructing weighted blowups}
Let $X$ be a variety, $P \in X$ a closed point and $U \subseteq X$ an affine open containing~$P$. Let $W \to Z$ be a weighted blowup of complex spaces with centre a point $Q \in Z$ such that $(X^{\mathrm{an}}, P)$ is locally biholomorphic to $(Z, Q)$. Then:
\begin{enumerate}[label=(\alph*), ref=\alph*]
\item \label{itm:a2a constructing weighted blowups - exists one} The construction in \cref{thm:a2a constructing wt-resp maps} gives a weighted blowup $Y \to X$ that is locally analytically equivalent to $W \to Z$.
\item \label{itm:a2a constructing weighted blowups - describes all} Every blowup $Y \to X$ that is locally analytically equivalent to $W \to Z$ is isomorphic over $X$ to a blowup $\hat Y \to X$ given in \cref{con:a2a construction} for some~$\psi$.
\end{enumerate}
\end{corollary}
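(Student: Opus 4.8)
The plan is to run \cref{con:a2a construction} on the local biholomorphism supplied by the hypothesis, use \cref{thm:a2a constructing wt-resp maps} and \cref{thm:wei wei-resp lifts} to see that the algebraic weighted blowup of the resulting affine model is, analytically, the pullback of $W \to Z$, and finally invoke \cref{thm:a2a isom over X algebraically iff analytically} to descend isomorphisms from the analytic to the algebraic category. Setting up, since $U$ is affine and $P$ a closed point I would write $U = \operatorname{Spec}(\mathbb C[x_1, \dots, x_n]/I)$ with $P = \mathbb V(x_1, \dots, x_n)$, assign weight $1$ to each $x_i$, and choose coordinates $y_1, \dots, y_m$ on $\mathbb C^m$ with weights $w_1, \dots, w_m$ so that, by \cref{def:pre wei blup}, $W \to Z$ is the $(w_1, \dots, w_m)$-blowup of $Z \subseteq \mathbb C^m$ at $Q = \bm 0$. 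The hypothesis gives a local biholomorphism $\psi \colon (U^{\mathrm{an}}, \bm 0) \to (Z, \bm 0)$.

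For \labelcref{itm:a2a constructing weighted blowups - exists one}, feed $\psi$ into \cref{con:a2a construction} to obtain $\hat U \subseteq \mathbb A^{n+m}$ together with the canonical isomorphism $\iota \colon \hat U \xrightarrow{\sim} U$. Let $\hat Y \to \hat U$ be the algebraic $(1, \dots, 1, w_1, \dots, w_m)$-blowup of $\hat U$; as it is an isomorphism over $\hat U \setminus \{\bm 0\}$, it glues along $\iota$ with the identity on $X \setminus \{P\}$ to a weighted blowup $Y \to X$ of $X$ at $P$ in the sense of \cref{def:pre wei blup}. By \cref{thm:a2a constructing wt-resp maps} the induced biholomorphism $\chi \colon (\hat U^{\mathrm{an}}, \bm 0) \to (Z, \bm 0)$ is weight-respecting for the weights $(1, \dots, 1, w_1, \dots, w_m)$ on $\hat U$ and $(w_1, \dots, w_m)$ on $Z$, and by design it satisfies $\chi = \psi \circ \iota$. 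Hence \cref{thm:wei wei-resp lifts} lifts $\chi$ to a biholomorphism of the weighted blowup of $\hat U^{\mathrm{an}}$ onto the weighted blowup of $Z$, that is, onto $W$, commuting with the maps to the bases. Transporting through $\iota$, this exhibits $Y^{\mathrm{an}}$ near its exceptional divisor as the pullback $\psi^\ast W$, so $Y \to X$ is locally analytically equivalent to $W \to Z$ via $\psi$, proving \labelcref{itm:a2a constructing weighted blowups - exists one}.

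For \labelcref{itm:a2a constructing weighted blowups - describes all}, let $Y \to X$ be any blowup locally analytically equivalent to $W \to Z$. By \cref{def:pre local analytic equivalence} I may take $\psi$ above to be the base biholomorphism witnessing this equivalence, so that $Y^{\mathrm{an}}$ near its exceptional divisor is biholomorphic to $\psi^\ast W$ over $U^{\mathrm{an}}$. Running \cref{con:a2a construction} on this same $\psi$ produces $\hat Y \to X$, and by the computation in \labelcref{itm:a2a constructing weighted blowups - exists one} its analytification near the exceptional divisor is likewise biholomorphic to $\psi^\ast W$ over $U^{\mathrm{an}}$, precisely because $\chi = \psi \circ \iota$. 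Composing these two identifications gives a biholomorphism between $Y^{\mathrm{an}}$ and $\hat Y^{\mathrm{an}}$ over $X^{\mathrm{an}}$ around their exceptional loci, and \cref{thm:a2a isom over X algebraically iff analytically} upgrades it to an isomorphism $Y \cong \hat Y$ over $X$.

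The crux is the identity $\chi = \psi \circ \iota$: it is exactly what forces the two analytic identifications in \labelcref{itm:a2a constructing weighted blowups - describes all} to lie over one and the same base map $U^{\mathrm{an}} \to Z$, so that their composite is a biholomorphism over $X^{\mathrm{an}}$ (over the identity of the base) rather than merely over some automorphism of $U^{\mathrm{an}}$; only in the former case does \cref{thm:a2a isom over X algebraically iff analytically} apply. Verifying this compatibility is where one must unwind \cref{con:a2a construction} with care, checking that replacing each $\psi_j$ by its truncation $\psi_j^{<w_j}$ in the defining equations of $\hat U$ is exactly compensated by the weight-respecting correction of \cref{thm:a2a constructing wt-resp maps}, so that the composite recovers $\psi$ on the nose. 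Everything else, namely the gluing to a blowup over all of $X$ and the analytic-to-algebraic descent, is then routine given the cited results.
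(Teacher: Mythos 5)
Your proposal is correct and follows essentially the same route as the paper: part (a) via \cref{con:a2a construction}, \cref{thm:a2a constructing wt-resp maps} and \cref{thm:wei wei-resp lifts} plus gluing, and part (b) by running the construction on the witnessing biholomorphism and descending with \cref{thm:a2a isom over X algebraically iff analytically}. Your explicit check that $\chi = \psi \circ \iota$ (so the composite analytic identification lies over the algebraic isomorphism $\hat U \to U$, as \cref{thm:a2a isom over X algebraically iff analytically} requires) is exactly the compatibility the paper's proof uses implicitly.
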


\begin{proof}
(a) Suffices to consider the case where $Z$ is a complex subspace of~$\mathbb C^m$ and $Q$ is the origin. Using \cref{thm:a2a constructing wt-resp maps}, we find an isomorphism $U \to \hat U \subseteq \mathbb A^{n+m}$ and a choice of weights for the variables $x_1, \ldots, x_n, y_1, \ldots, y_m$ of $\mathbb A^{n+m}$ such that the weighted blowup of $\hat U \subseteq \mathbb A^{n+m}$ is locally analytically equivalent to $W \to Z$ by \cref{thm:wei wei-resp lifts}. By gluing, we find a weighted blowup $Y \to X$ which is locally analytically equivalent to $W \to Z$.

(b) Let $X^\mathrm{an} \to Z$ be local biholomorphism that lifts to the blown-up spaces. The construction in \cref{thm:a2a constructing wt-resp maps} gives a weighted blowup $\hat Y \to \hat X$, an isomorphism $\hat X \to X$ and a weight-respecting local biholomorphism $\hat X^\mathrm{an} \to Z$. Since both $X^\mathrm{an} \to Z$ and $\hat X^\mathrm{an} \to Z$ lift to the blown-up spaces, $\hat X^\mathrm{an} \to X^\mathrm{an}$ locally lifts to blown-up spaces. By \cref{thm:a2a isom over X algebraically iff analytically}, the isomorphism $\hat X \to X$ lifts to the blown-up spaces.
\end{proof}

\Cref{exa:a2a local analytic resolution of ODP} shows that \cref{thm:a2a constructing projective morphisms} and \cref{thm:a2a constructing weighted blowups}\labelcref{itm:a2a constructing weighted blowups - exists one} cannot always be true when we blow up a positive-dimensional closed complex subspace.

\begin{example} \label{exa:a2a local analytic resolution of ODP}
Let $X$ be a $\mathbb Q$-factorial 3-fold with a unique singular point $P$ which is an \emph{ordinary double point}, meaning a singularity isomorphic to $(\mathbb V(xy + zt), \bm 0) \subseteq (\mathbb C^4, \bm 0)$. Then locally analytically there exists a small resolution~$\varphi$, the blowup of the divisor $\mathbb V(x, z)$ of $\mathbb V(xy + zt)$ with exceptional locus a curve. On the other hand, since $X$ is $\mathbb Q$-factorial, there is no proper birational morphism $Y \to X$ from a smooth variety $Y$ which is locally analytically equivalent to~$\varphi$.
\end{example}

\section{Counting divisorial contractions}

We show that we can simplify \cref{thm:pre div contr cAn}.

\begin{theorem} \label{thm:cou simpler theorem}
\Cref{thm:pre div contr cAn} remains true if we
\begin{enumerate}
\item \label{itm:cou simpler cAn} replace \cref{itm:pre div contr cAn - gen f} with ``$\operatorname{wt} f = r_1 + r_2$'',
\item \label{itm:cou simpler cA1} replace \cref{itm:pre div contr cAn - cA1 f} with ``$(\mathbb V(f), \bm 0)$ is an $A_2$-singularity and $\operatorname{wt} f = 6$'' and
\item \label{itm:cou simpler cA2} replace \cref{itm:pre div contr cAn - cA2 f} with either ``$f = x^2 + y^2 + z^3 + xt^2$'' or with ``$(\mathbb V(f), \bm 0)$ is an $E_6$-singularity and $\operatorname{wt} f = 6$''.
\end{enumerate}
\end{theorem}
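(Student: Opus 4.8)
The replacements only weaken the hypotheses on $f$: each explicit normal form appearing in \cref{thm:pre div contr cAn} manifestly satisfies the corresponding new description. Indeed, $xy + g(z,t)$ with $\operatorname{wt} g = r_1+r_2$ has weight $r_1+r_2$; the polynomial $xy+z^2+t^3$ has weight $6$ for $\bm w = (1,5,3,2)$ and defines an $A_2$-singularity; and $x^2+y^2+z^3+xt^2$ has weight $6$ for $\bm w = (4,3,2,1)$ and defines an $E_6$-singularity. Hence the set of blowups meeting a new condition contains those meeting the old one, so the ``only if'' direction of the classification is inherited verbatim from \cref{thm:pre div contr cAn}. The entire content is the reverse inclusion: every $\bm w$-blowup of a $\mathbb V(f)$ satisfying a new condition must be locally analytically equivalent to one satisfying the old condition. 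By \cref{thm:wei wei-resp lifts} it suffices, in each case, to exhibit a weight-respecting automorphism of $\mathbb C\{x,y,z,t\}$ carrying $f$ to the old normal form, since such an automorphism lifts to an isomorphism of the weighted blowups.

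For \labelcref{itm:cou simpler cAn} I would start from the hypotheses that $\mathbb V(f)$ is a $cA_n$-singularity (\cref{def:pre cAn}) and $\operatorname{wt} f = r_1+r_2 = a(n+1)$. The goal is to produce $\Psi$ weight-respecting with $\Psi(f) = xy + g(z,t)$ and $\operatorname{wt} g = r_1+r_2$. Since $\operatorname{wt}(xy) = r_1+r_2 = \operatorname{wt} f$, the plan is to arrange that the coefficient of $xy$ is non-zero and then invoke \cref{thm:wei x1*x2 weight-respecting splitting} to kill every other monomial lying in the ideal $(x,y)$, leaving $\Psi(f) = xy + g(z,t)$. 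To see that $xy$ may be assumed present I would examine the weight-$(r_1+r_2)$ part: for $n \geq 2$ the quadratic part of the $cA_n$-singularity has rank two, and the arithmetic constraints ($r_1 \leq r_2$, $\gcd(a,r_1)=\gcd(a,r_2)=1$, $a(n+1)=r_1+r_2$) together with $\operatorname{wt} f = r_1+r_2$ force almost every low-weight quadratic monomial to vanish, so the rank-two form is $y\cdot\ell$ with $\ell$ of weight $r_1$; a weight-respecting change turns $\ell$ into $x$. The case $n=1$, where $g$ may carry its own quadratic part, needs a little more care. Finally, once $\Psi(f)=xy+g$ with $\operatorname{mult} g = n+1$, the weight identity $a(n+1)=r_1+r_2$ forces the degree-$(n+1)$ part of $g$ to be a non-zero multiple of $z^{n+1}$ (the unique degree-$(n+1)$ monomial in $z,t$ of weight $a(n+1)$), whence $\operatorname{wt} g = r_1+r_2$.

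Part \labelcref{itm:cou simpler cA1} is analogous but easier, because the target $xy+z^2+t^3$ is already quasihomogeneous of weight $6$ and defines an isolated singularity. Here $A_2$ has corank one, so the quadratic part has rank three; the weight constraint leaves only $xy$ and $z^2$ available as rank-contributing quadratic monomials, which I would normalise weight-respectingly to $xy+z^2$, leaving $t$ as the null direction. The $A_2$-structure forces a non-zero $t^3$ term, so after rescaling the least-weight part becomes $xy+z^2+t^3$, and the higher-weight monomials are then removed by \cref{thm:wei wei-resp reduce simple sing to normal form}.

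The $E_6$-case \labelcref{itm:cou simpler cA2} is where the real difficulty lies, and the main obstacle is that the weight-$6$ part $y^2+z^3+xt^2$ of $x^2+y^2+z^3+xt^2$ is \emph{not} isolated: its singular locus is the $x$-axis, and the isolating term $x^2$ sits in weight $8$. Thus one cannot simply reduce $f$ to its leading quasihomogeneous part via an ADE-normal-form argument. My plan is to first normalise the rank-two quadratic part and the weight-$6$ part to $y^2+z^3+xt^2$, then apply \cref{thm:wei wei-resp normal forms} with $f_0 = y^2+z^3+xt^2$; since the Milnor algebra of $f_0$ is infinite-dimensional, the surviving higher-weight monomials are essentially the powers of $x$, so one must show that the coefficient of $x^2$ is forced to be non-zero (otherwise the singularity is not isolated, contradicting $E_6$) and then complete the square in $x$ by a weight-respecting substitution $x \mapsto x + (\text{higher weight})$ to clear $x^3, x^4, \ldots$ and the cross terms, arriving at $x^2+y^2+z^3+xt^2$. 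The same reduction should show that the intricate polynomial of \cref{itm:pre div contr cAn - cA2 f} collapses to this normal form and that the description ``$(\mathbb V(f),\bm 0)$ is $E_6$ with $\operatorname{wt} f = 6$'' is interchangeable with it. The delicate bookkeeping of the non-isolated leading term, the infinitely many potentially surviving powers of $x$, and the matching of Yamamoto's parameters $c$ and $p$ is the crux of the proof.
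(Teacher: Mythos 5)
Your framing of the logical structure fails exactly where the theorem has real content, namely part (3). The replacement ``$f = x^2+y^2+z^3+xt^2$'' is \emph{not} a weakening of \cref{itm:pre div contr cAn - cA2 f}: it is a single polynomial, while \cref{itm:pre div contr cAn - cA2 f} is a whole family, so for this replacement it is the ``only if'' direction (every contraction of type (3) is equivalent to the blowup of this one polynomial) that needs proof, not the ``if'' direction. Likewise, for the replacement ``$E_6$ and $\operatorname{wt} f = 6$'', Yamamoto's polynomial manifestly has weight $6$ but it is \emph{not} manifest that it defines an $E_6$-singularity; the paper spends the first paragraph of its proof of (3) on precisely this, reducing Yamamoto's $f$ by explicit coordinate changes to $x^2+y^2+z^3+t^4$. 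Your proposal never supplies this step (the ``matching of Yamamoto's parameters $c$ and $p$'' that you defer to the end is exactly it), so with either replacement one implication of the equivalence is left unproved.

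The second and larger gap is that your plan for (3) opens with ``first normalise the rank-two quadratic part and the weight-$6$ part to $y^2+z^3+xt^2$'', which is the hard part of the whole proof and is not justified. A priori the weight-$6$ part of $f$ can involve $xz$, $y^2$, $yzt$, $yt^3$, $z^3$, $z^2t^2$, $zt^4$, $t^6$ as well as $xt^2$, and weight-respecting changes cannot create an absent $xt^2$ once $xz$ is absent (initial parts of weight-respecting substitutions send the $x$-free monomials to $x$-free series). So one must \emph{deduce} from the $E_6$ hypothesis that the coefficient of $xz$ vanishes and those of $y^2$, $xt^2$, $z^3$ do not; the paper does this by three separate contradiction arguments using the classification of simple singularities (otherwise producing $D_k$, $E_7$, $E_8$ or non-simple points), and this occupies most of its proof of (3). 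Moreover, your justification that the coefficient of $x^2$ must be non-zero ``otherwise the singularity is not isolated'' is false: $y^2+z^3+xt^2+x^3$ has an isolated singularity (it is $P_8$, Milnor number $8$); the correct reason is that an $E_6$-point has quadratic rank $2$, which a vanishing $x^2$-coefficient would violate. (A smaller inaccuracy of the same kind occurs in your part (2): weight $6$ also allows $y^2$, $yz$, $yt$ in the quadratic part, and the paper must rule out the case ``$yt \neq 0$, $xy = 0$'' by a separate contradiction.) For what it is worth, the tail of your argument for (3) --- applying \cref{thm:wei wei-resp normal forms} in four variables with the non-isolated $f_0 = y^2+z^3+xt^2$ and its infinite monomial basis, then clearing the powers of $x$ --- is a genuinely different route from the paper, which instead reduces $f$ to the shape $x^2+y^2+xp+g$ and runs a two-variable argument on $g - p^2/4$; but neither tail can stand without the missing normalisation, so the proposal as written does not prove the statement.
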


\begin{proof}
(1) Follows from \cite[Proposition~4.6]{Pae21}.

\medskip

(2) Let $f \in \mathbb C\{x, y, z, t\}$ be such that $\operatorname{wt} f = 6$ and $(\mathbb V(f), \bm 0)$ is an $A_2$-singularity.

If the coefficient of $yt$ is non-zero and the coefficient of $xy$ is zero in~$f$, then after a suitable coordinate change of the form $t \mapsto by + cz$, where $b$ and $c$ are complex numbers, the coefficients of $y^2$ and $yz$ will be zero in~$f$. This coordinate change is weight-respecting. Since $f$ has quadratic rank~$3$, after scaling, the quadratic part will be $yt + z^2$. Now $(\mathbb V(f), \bm 0)$ is an $A_2$-singularity if and only if the coefficient of $x^3$ is non-zero, which cannot happen since $\operatorname{wt} f = 6$.

Therefore, since the quadratic rank of $f$ is~$3$, the coefficient of $xy$ is non-zero. After a suitable coordinate change of the form $x \mapsto x + by + cz + dt$, where $a, b, c, d$ are complex numbers, the coefficients of $y^2$, $yz$ and $yt$ will be zero. This coordinate change is weight-respecting. Now the coefficient of $z^2$ must be non-zero. After scaling, the quadratic part of $f$ will be $xy + z^2$. We see that $(\mathbb V(f), \bm 0)$ is an $A_2$-singularity if and only if the coefficient of $t^3$ is non-zero.

After scaling, the least weight non-zero quasihomogeneous part of $f$ with respect to weights $\bm w = (3, 3, 3, 2)$ will be $xy + z^2 + t^3$. \Cref{thm:wei wei-resp reduce simple sing to normal form} gives a weight-respecting automorphism $\Psi$ such that $\Psi(f) = xy + z^2 + t^3$.

\medskip

(3) To begin, we show that $f \in \mathbb C\{x, y, z, t\}$ satisfying \cref{itm:pre div contr cAn - cA2 f} of \cref{thm:pre div contr cAn} defines an $E_6$-singularity at the origin. Let $\Psi$ be the automorphism of $\mathbb C\{x, y, z, t\}$ given by composing $x \mapsto x/\sqrt{1 - c^2}$ with $y \mapsto y - c(x + p_{\operatorname{wt} = 3})$. Defining $p', g' \in \mathbb C\{z, t\}$ by $p = 1/2 (\sqrt{1 - c^2}) p' + c^2 p_{\operatorname{wt} = 3}$ and $g = g' + c^2 + p_{\operatorname{wt} = 3}^2 - z^3$, we find that $\Psi(f)$ is equal to $x^2 + y^2 + xp' + g'$, where $\operatorname{wt} p'$ is~$2$, $\operatorname{wt} g'$ is~$6$ and all monomials of weight greater than $3$ have coefficient zero in~$p'$.
Let $\Phi$ be the coordinate change $x \mapsto x - p'/2$ composed with a suitable scaling of the variable~$t$. Then the least weight non-zero quasihomogeneous part of $\Phi(\Psi(f))$ will be $x^2 + y^2 + z^3 + t^4$ under the weights $\bm w = (6, 6, 4, 3)$. Using the lemma in \cite[§12.6]{AGZV85} or \cref{thm:wei wei-resp reduce simple sing to normal form} we find that $\Phi(\Psi(f))$ is right equivalent to $x^2 + y^2 + z^3 + t^4$, proving that $f$ defines an $E_6$-singularity at the origin.

Now let $f \in \mathbb C\{x, y, z, t\}$ be such that $\operatorname{wt} f = 6$ and $(\mathbb V(f), \bm 0)$ is an $E_6$-singularity.

First, we show that the coefficient of $xz$ in $f$ is zero. We remind that the quadratic rank of a convergent power series defining a $3$-dimensional $E_6$-singularity is~$2$. If the coefficient of $xz$ is non-zero, since $f$ has quadratic rank $2$, the coefficient of $y^2$ must be zero. After a suitable coordinate change of the form $z \mapsto ax + by + cz$, where $a, b$ and $c$ are complex numbers and $c$ is non-zero, the quadratic part of $f$ will be $xz$. By \cref{thm:wei x1*x2 weight-respecting splitting} after a weight-respecting coordinate change $f$ will be of the form $xz + h$ where $h \in \mathbb C\{y, t\}$. If the coefficient of $y^2 t$ is non-zero, then by \cite[Theorem~I.2.51]{GLS07} $(\mathbb V(f), \bm 0)$ is either a $D_k$-singularity or a non-isolated singularity, a contradiction. So the coefficient of $y^2 t$ is zero. Since $(\mathbb V(f), \bm 0)$ is a $cA_2$-singularity, after scaling the 3-jet of $f$ will be $xz + y^3$. If the coefficient $d$ of $y t^3$ is non-zero, then \cref{thm:wei wei-resp reduce simple sing to normal form} with $\bm w := (9, 6, 9, 4)$ and $f_0 := xz + y^3 + d yt^3$ implies that $(\bm V(f), \bm 0)$ is an $E_7$-singularity, a contradiction. Therefore, the coefficient of $y t^3$ is zero. Now \cite[Theorem~I.2.55(2)]{GLS07} shows that $(\mathbb V(f), \bm 0)$ is not a simple singularity, a contradiction.

Second, we show that the coefficient of $y^2$ is non-zero. If the coefficient of $y^2$ is zero, then the coefficient of $xy$ must be non-zero. After a suitable coordinate change of the form $y \mapsto ax + y + h$, where $a \in \mathbb C$ and $h \in \mathbb C\{x, y, z, t\}$ has multiplicity at least $2$ or is zero, the only monomial that is divisible by $x$ and has non-zero coefficient in $f$ will be $xy$. Now the weight of $f$ is at least $5$ and the monomials of weight $5$ that have a non-zero coefficient in $f$ are in the set $\{zt^3, t^5\}$. After a suitable coordinate change of the form $x \mapsto x + h'$, where $h' \in \mathbb C\{y, z, t\}$ has multiplicity at least $2$ or is zero, the only monomial in the ideal $(x, y)$ that has non-zero coefficient in $f$ will be $xy$. The weight of $f$ is still at least $5$ and the monomials of weight $5$ are still in the set $\{zt^3, t^5\}$. Since $(\mathbb V(f), \bm 0)$ is a $cA_2$-singularity, the coefficient of $z^3$ is non-zero. If the coefficient of $zt^3$ is non-zero, then \cref{thm:wei wei-resp reduce simple sing to normal form} with $\bm w := (9, 9, 6, 4)$ and $f_0 := axy + bz^3 + czt^3$, where $a, b, c \in \mathbb C$ are non-zero, shows that $(\mathbb V(f), \bm 0)$ is an $E_7$-singularity, a contradiction. So the coefficient of $zt^3$ is zero. If the coefficient of $t^5$ is non-zero, then \cref{thm:wei wei-resp reduce simple sing to normal form} with $\bm w := (15, 15, 5, 3)$ and $f_0 := axy + bz^3 + ct^5$, where $a, b, c \in \mathbb C$ are non-zero, shows that $(\bm V(f), \bm 0)$ is an $E_8$-singularity, a contradiction. Therefore, $f - axy$ belongs to the ideal $(z, t^2)$ of $\mathbb C\{z, t\}$. By \cite[Theorem~I.2.55(2)]{GLS07} $(\mathbb V(f), \bm 0)$ is not a simple singularity, a contradiction.

Next, we show that the coefficient of $xt^2$ is non-zero. If the coefficient of $xt^2$ is zero, then after a suitable linear weight-respecting coordinate change the quadratic part of $f$ will be $xy + y^2$. Now after a suitable weight-respecting coordinate change of the form $y \mapsto y + h$, where $h \in \mathbb C\{x, y, z, t\}$ has multiplicity at least 2 or is non-zero, followed by an application of \cref{thm:wei wei-resp power series to poly}, the only monomial with non-zero coefficient in $f$ that is divisible by $x$ will be $xy$. After a suitable coordinate change of the form $x \mapsto x + h'$, where $h' \in \mathbb C\{x, y, z, t\}$, the only monomial in the ideal that has non-zero coefficient in $f$ will be $xy$ and the weight of $f$ will still be~$6$. By \cite[Theorem~I.2.55(2)]{GLS07} $(\mathbb V(f), \bm 0)$ is not a simple singularity, a contradiction.

Now, after a suitable linear weight-respecting coordinate change, the quadratic part of $f$ will be $x^2 + y^2$. Using a suitable weight-respecting coordinate change of the form $x \mapsto x + h$ and $y \mapsto y + h'$, where $h, h' \in \mathbb C\{x, y, z, t\}$, followed by an application of \cref{thm:wei wei-resp power series to poly} the power series $f$ will have the form
\begin{equation} \label{equ:cou x2 y2 xp g}
f = x^2 + y^2 + xp + g,
\end{equation}
where $p \in \mathbb C\{z, t\}$ has only monomials of weight $2$ and $3$, the coefficient of $t^2$ in $p$ is~$1$ and the coefficient of $z^3$ in $g \in \mathbb C\{z, t\}$ is~$1$.

Finally, we show that there exists a weight-respecting automorphism $\Psi$ of $\mathbb C\{x, y, z, t\}$ such that $\Psi(f) = x^2 + y^2 + z^3 + xt^2$, where $f$ is given by \cref{equ:cou x2 y2 xp g}. The least weight non-zero quasihomogeneous part of $g - p^2/4$ under the weights $(4, 3)$ is $z^3 - t^4/4$. By \cref{thm:wei wei-resp reduce simple sing to normal form} there exists an automorphism $\Phi$ of $\mathbb C\{z, t\}$ such that $\Phi(g - p^2/4)$ is equal to $z^3 - t^4/4$, $\Phi(z) - z$ is in the ideal $(z, t)^2$ and $\Phi(t) - t$ is in the ideal $(z, t^2)$. So $\Phi$ is weight-respecting with respect to weights $(2, 1)$. We find that
\begin{equation} \label{equ:cou showing f is x2 y2 xt2 z3}
(\Phi(p) - t^2) (\Phi(p) + t^2) = 4(\Phi(g) - z^3)
\end{equation}
is either zero or has weight at least~$6$. The term $\Phi(p) + t^2$ has weight $2$ since the coefficient of $t^2$ is~$2$. Therefore, $\Phi(p) - t^2$ is either zero or has weight at least~$4$. Applying $\Phi^{-1}$ to \cref{equ:cou showing f is x2 y2 xt2 z3}, we find that $\Phi^{-1}(t^2) - p$ is also either zero or has weight at least~$4$. Now suffices to choose $\Psi$ to be
\[
\Psi(x) := x + \frac{t^2 - \Phi(p)}{2},~
\Psi(y) := y,~
\Psi(z) := \Phi(z),~
\Psi(t) := \Phi(t).\qedhere
\]
\end{proof}

\begin{lemma} \label{thm:cou div contr same type are isom}
Let $n$ be a positive integer. Let $P$ be a $cA_n$-point of a $\mathbb Q$-Gorenstein variety $X$ with terminal singularities. Then any two divisorial contractions to $X$ with centre $P$ are locally analytically equivalent if they are either
\begin{enumerate}
\item \label{itm:cou div contr same type - cAn} both of type (1) with the same weights $(r_1, r_2, a, 1)$,
\item both of type (2) or
\item both of type (3).
\end{enumerate}
\end{lemma}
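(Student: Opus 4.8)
The plan is to reduce everything to producing a weight-respecting biholomorphism (\cref{def:wei weight-respecting}) between the two local hypersurface models, because by \cref{thm:wei wei-resp lifts} such a map lifts to the weighted blown-up spaces and therefore exhibits a local analytic equivalence. Each of the two divisorial contractions is, by \cref{thm:pre div contr cAn}, locally analytically equivalent to the prescribed weighted blowup of some $\mathbb V(f)$ resp.\ $\mathbb V(f')$ with $\operatorname{wt} f = \operatorname{wt} f'$ equal to the value forced by the common weights ($r_1 + r_2$ in case (1), and $6$ in cases (2) and (3)). Since weight-respecting maps are closed under composition and inversion, it suffices to reduce both $f$ and $f'$ by weight-respecting automorphisms of $\mathbb C\{x, y, z, t\}$ to one and the same normal form $f_0$; the two reductions then compose to a weight-respecting biholomorphism $(\mathbb V(f), \bm 0) \to (\mathbb V(f'), \bm 0)$, and \cref{thm:wei wei-resp lifts} concludes.

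For cases (2) and (3) this reduction is already at hand: the proofs of \cref{thm:cou simpler theorem}(2) and \cref{thm:cou simpler theorem}(3) produce, for every admissible $f$, a weight-respecting automorphism $\Psi$ with $\Psi(f)$ equal to the fixed normal form $xy + z^2 + t^3$ (the $A_2$ case, weights $(1, 5, 3, 2)$) resp.\ $x^2 + y^2 + z^3 + x t^2$ (the $E_6$ case, weights $(4, 3, 2, 1)$). As the weight vector is uniquely determined in each of these cases, both $f$ and $f'$ reduce weight-respectingly to the same $f_0$, and the previous paragraph applies verbatim.

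Case (1) is the substantive one and I expect it to be the main obstacle. Here I would first invoke \cref{thm:wei x1*x2 weight-respecting splitting} for the monomial $xy$, whose coefficient is non-zero and whose weight equals $\operatorname{wt} f = r_1 + r_2$, to reduce $f$ and $f'$ weight-respectingly to the shapes $xy + g(z, t)$ and $xy + g'(z, t)$ with $\operatorname{wt} g = \operatorname{wt} g' = r_1 + r_2$ for the weights $(a, 1)$ on $(z, t)$; note that $a(n+1) = r_1 + r_2$ makes the coefficient of $z^{n+1}$ non-zero, in agreement with the multiplicity condition of \cref{def:pre cAn}. It then remains to pass from $g$ to $g'$ weight-respectingly. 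Since $X$ and $P$ are fixed, the germs $(\mathbb V(xy + g), \bm 0)$ and $(\mathbb V(xy + g'), \bm 0)$ are biholomorphic, and I would use this to show that $g$ and $g'$ are contact equivalent in $\mathbb C\{z, t\}$. The delicate point, and the crux of the whole lemma, is to upgrade this to a \emph{weight-respecting} equivalence: the plan is to normalise the leading weight part $g_{\operatorname{wt} = r_1 + r_2}$ and to strip the higher-weight terms by \cref{thm:wei wei-resp normal forms} (exactly as in \cref{thm:wei wei-resp reduce simple sing to normal form}), absorbing the constant factor of the unit through the weight-preserving scaling $x \mapsto c x$ of the $xy$-term. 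Once $g$ and $g'$ are brought to a common weight-respecting normal form, the resulting map is weight-respecting and \cref{thm:wei wei-resp lifts} again finishes the proof.
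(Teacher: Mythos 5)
Your overall framework --- reduce both local models by weight-respecting automorphisms to a common normal form and then lift with \cref{thm:wei wei-resp lifts} --- is sound, and for cases (2) and (3) it is essentially the paper's own argument: the paper treats case (2) as immediate, since \cref{itm:pre div contr cAn - cA1 f} prescribes the polynomial $xy + z^2 + t^3$ literally, and settles case (3) by \cref{thm:cou simpler theorem}\labelcref{itm:cou simpler cA2}, which is exactly the reduction to $x^2 + y^2 + z^3 + xt^2$ that you invoke.

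Case (1) is where you have a genuine gap. The paper does not prove this case by hand: it cites \cite[Proposition~4.6]{Pae21}, and your sketch amounts to an attempt to reprove that proposition in a few lines. The missing step is precisely the one you yourself call the crux: upgrading a contact equivalence between $g$ and $g'$ to a \emph{weight-respecting} one. Your plan --- normalise the leading quasihomogeneous part and strip higher-weight terms via \cref{thm:wei wei-resp normal forms} --- does not accomplish this, because \cref{thm:wei wei-resp normal forms} is not a canonical-form statement: applied to $g$ and $g'$ it produces $g_0 + \sum_{e \in S} c_e e$ and $g_0' + \sum_{e \in S} c_e' e$, where $g_0 = g_{\operatorname{wt} = r_1 + r_2}$ and $g_0' = g'_{\operatorname{wt} = r_1 + r_2}$, and you supply no argument that (i) the leading parts $g_0$ and $g_0'$ are related by a weight-respecting change of $(z, t)$ --- a nontrivial rigidity claim, since the contact equivalence you start from has no reason to respect the $(a,1)$-filtration --- nor that (ii) after matching leading parts the retained coefficients $c_e$, $c_e'$ can be made to agree. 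Point (ii) is not a formality: writing $g_0 = c \prod_i (z - \lambda_i t^a)$, the roots $\lambda_i$ may repeat, in which case $g_0$ has a non-isolated critical locus, its Milnor algebra is infinite-dimensional, $S$ is infinite, and the ``normal form'' retains infinitely many undetermined coefficients carrying essential analytic information (for $n = 1$ these terms are what distinguish, say, $(z + t^a)^2 + t^{2a+1}$ from $(z + t^a)^2 + t^{2a+2}$); matching them between $g$ and $g'$ is then exactly as hard as the original problem. Two smaller points: the unit in the contact equivalence need not be constant as you assume (although a non-constant unit $u$ can in fact be absorbed into $x \mapsto ux$, which is still weight-respecting), and the claim that a biholomorphism $(\mathbb V(xy+g), \bm 0) \cong (\mathbb V(xy+g'), \bm 0)$ forces $g$ and $g'$ to be contact equivalent in two variables itself needs justification (standard, via Mather--Yau and the isomorphism of Tjurina algebras $\mathbb C\{z,t\}/(g, \partial g/\partial z, \partial g/\partial t)$). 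As it stands, your case (1) either needs the full argument of \cite[Proposition~4.6]{Pae21} supplied, or should simply cite it, as the paper does.
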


\begin{proof}
Case (1) is \cite[Proposition~4.7]{Pae21}, case (2) is clear and case (3) follows from \cref{thm:cou simpler theorem}\labelcref{itm:cou simpler cA2}.
\end{proof}

We describe conditions for the existence of divisorial contractions to $X$ with centre $P$ of types (1), (2) and (3) of \cref{thm:pre div contr cAn}.

\begin{lemma} \label{thm:cou exist of div contr}
Let $P$ be a $cA_n$-point of a $\mathbb Q$-Gorenstein variety $X$ with terminal singularities.
\begin{enumerate}[label=(\alph*), ref=\alph*]
\item \label{itm:cou exist of div contr - lower r1 r2 a 1} If there exists a divisorial contraction of type (1) to $X$ with center $P$ which is an $(r_1, r_2, a, 1)$-blowup, then for all $a' \in \{1, \ldots, a\}$ and for all $r_1' \in \{1, \ldots, a'(n+1) - 1\}$ such that $a'$ is coprime to both $r_1'$ and $r_2' := a'(n+1) - r_1'$ there exists a divisorial contraction of type (1) which is an $(r_1', r_2', a', 1)$-blowup.
\item \label{itm:cou exist of div contr - upper bound for a} There is a positive integer $N$ such that there is no divisorial contraction of type (1) to $X$ with center $P$ which is an $(r_1, r_2, a, 1)$-blowup where $a > N$.
\item \label{itm:cou exist of div contr - cA1 a geq 2} If $n = 1$, then there exists a divisorial contraction of type (1) which is an $(r_1, r_2, a, 1)$-blowup if and only if $(X^{\mathrm{an}}, P)$ is an $A_k$-singularity where $k \geq a$.
\item \label{itm:cou exist of div contr - cA1 special} If $n = 1$, then there exists a divisorial contraction of type (2) if and only if $(X^{\mathrm{an}}, P)$ is the $A_2$-singularity.
\item \label{itm:cou exist of div contr - cA2 a geq 2} If $n = 2$, then there exists a divisorial contraction of type (1) with $a = 2$ if and only if $(X^{\mathrm{an}}, P)$ is not a simple singularity.
\item \label{itm:cou exist of div contr - cA2 special} If $n = 2$, then there exists a divisorial contraction of type (3) if and only if $(X^{\mathrm{an}}, P)$ is an $E_6$-singularity.
\end{enumerate}
\end{lemma}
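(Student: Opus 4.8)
The plan is to treat all six parts through a single realisability principle. By \cref{thm:cou simpler theorem}, the three types of divisorial contraction are governed purely by whether one can choose local analytic coordinates putting $(X^{\mathrm{an}}, P)$ into a prescribed weighted normal form: a type~(1) contraction with weights $(r_1, r_2, a, 1)$ exists iff $(X^{\mathrm{an}}, P) \cong \mathbb V(f)$ with $\operatorname{wt} f = r_1 + r_2$; a type~(2) contraction exists iff the germ is an $A_2$-singularity realisable with $\operatorname{wt} f = 6$ for $\bm w = (1,5,3,2)$; and a type~(3) contraction exists iff it is an $E_6$-singularity realisable with $\operatorname{wt} f = 6$ for $\bm w = (4,3,2,1)$. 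Writing $f = xy + g(z,t)$ in the $cA_n$ normal form (\cref{def:pre cAn}), the type~(1) condition becomes $\operatorname{wt} g = a(n+1)$ with $\operatorname{wt} z = a$, $\operatorname{wt} t = 1$; since $\operatorname{mult} g = n+1$, a comparison of weights among the degree-$(n+1)$ monomials forces the coefficient of $z^{n+1}$ to be non-zero, which is the observation underlying the whole lemma.

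For \labelcref{itm:cou exist of div contr - lower r1 r2 a 1} I would re-use the coordinates realising the $(r_1, r_2, a, 1)$-blowup. Each monomial $z^i t^j$ occurring in $g$ satisfies $ai + j \ge a(n+1)$; a short case split on $i \le n+1$ versus $i \ge n+2$ shows $a'i + j \ge a'(n+1)$ for every $a' \le a$, while the term $z^{n+1}$ attains equality. Hence the same $g$ has $\operatorname{wt} g = a'(n+1) = r_1' + r_2'$ under the lowered weights, and after possibly swapping $x$ and $y$ this yields the $(r_1', r_2', a', 1)$-blowup for every admissible split. For \labelcref{itm:cou exist of div contr - upper bound for a}, realising $\operatorname{wt} g = a(n+1)$ with $\operatorname{wt} t = 1$ forces $g$ to vanish to order at least $a(n+1)$ along the smooth curve $\{z = 0\}$; since an isolated plane-curve singularity has finite maximal contact order $M$ along smooth curves, we get $a(n+1) \le M$ and hence $a \le N := M/(n+1)$.

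Parts \labelcref{itm:cou exist of div contr - cA1 a geq 2} and \labelcref{itm:cou exist of div contr - cA2 a geq 2} are the arithmetic heart. For $n = 1$ I would use that $\operatorname{mult} g = 2$ together with $\operatorname{wt} g = 2a$ forces the $2$-jet to be a multiple of $z^2$, then complete the square (Weierstrass preparation plus the splitting lemma) to reduce $g$ to $z^2 + \psi(t)$; the weight condition translates into a lower bound on $\operatorname{ord}\psi$, hence on $k = \mu(g)$, giving the stated inequality relating $k$ and $a$. For $n = 2$, $a = 2$ the target is $\operatorname{wt} g = 6$ with $\operatorname{wt} z = 2$, $\operatorname{wt} t = 1$, which forces the $3$-jet to be a pure cube $z^3$ and kills the low-weight monomials $t^4, t^5, zt^3$. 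Comparing with \cref{def:pre simple singularity}, the simple mult-$3$ germs are exactly $D_k, E_6, E_7, E_8$: the $D_k$ are excluded because their $3$-jet is not a triple line, and $E_6, E_7, E_8$ are excluded because their defining low-weight $t$-terms cannot be removed (equivalently their Milnor numbers are too small for weighted degree~$6$). Thus the weight condition is satisfiable precisely for non-simple germs, which is \labelcref{itm:cou exist of div contr - cA2 a geq 2}; here \cref{thm:wei wei-resp reduce simple sing to normal form,thm:wei x1*x2 weight-respecting splitting} supply the needed weight-respecting coordinate changes.

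Finally, parts \labelcref{itm:cou exist of div contr - cA1 special} and \labelcref{itm:cou exist of div contr - cA2 special} follow almost directly from \cref{thm:cou simpler theorem}\labelcref{itm:cou simpler cA1,itm:cou simpler cA2}. The forward implications are immediate, since a type~(2) (resp.\ type~(3)) blowup forces the singularity to be $A_2$ (resp.\ $E_6$). For the converses I would exhibit the explicit normal forms: any $A_2$-point can be written as $\mathbb V(xy + z^2 + t^3)$, which has $\operatorname{wt} f = 6$ for $\bm w = (1,5,3,2)$, and any $E_6$-point can be written as $\mathbb V(x^2 + y^2 + z^3 + xt^2)$, which has $\operatorname{wt} f = 6$ for $\bm w = (4,3,2,1)$; in each case \cref{thm:cou simpler theorem} certifies that the corresponding weighted blowup is a divisorial contraction. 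I expect the genuine obstacle to be the sharp realisability analysis in \labelcref{itm:cou exist of div contr - cA1 a geq 2,itm:cou exist of div contr - cA2 a geq 2}: controlling exactly which singularity types admit the prescribed weighted order, and ensuring that the coordinate changes clearing the low-weight monomials can be taken weight-respecting, is the step where the classification of simple singularities and the weighted Milnor-number estimates must be combined carefully.
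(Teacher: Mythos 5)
Parts (a), (d) and (f) of your proposal are sound and essentially coincide with the paper's argument: for (a) the paper makes exactly your observation that a monomial $z^i t^j$ with $ai + j \ge a(n+1)$ also has $a'i + j \ge a'(n+1)$ for $a' \le a$ (with $z^{n+1}$ realising equality), and for (d), (f) both you and the paper reduce to the normal forms $xy + z^2 + t^3$ and $x^2 + y^2 + z^3 + xt^2$ via \cref{thm:cou simpler theorem}. The genuine gap is part (b). Your key claim --- that a reduced plane curve germ has finite maximal contact order along smooth curves --- is false whenever $\mathbb V(g)$ contains a smooth branch: the contact order along that branch is infinite, and along smooth curves osculating it to higher and higher order it is unbounded. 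This is not a marginal case here: every $A_{2m-1}$ threefold point has $g = z^2 + t^{2m} = (z - \mathrm i t^m)(z + \mathrm i t^m)$, a union of two smooth branches, so your constant $M$ does not exist, even though the conclusion of (b) holds there (one gets $a \le m$). The paper's proof of (b) uses a different invariant, the Milnor number: if $\operatorname{wt} g \ge a(n+1)$ for the weights $(a,1)$, then $\mu(xy+g) = \mu(g) \ge n\bigl((n+1)a - 1\bigr)$ (the quasihomogeneous Milnor number formula combined with semicontinuity, \cite[Corollary~I.2.18]{GLS07} or \cite[§12.2]{AGZV85}), and since the isolated singularity $(X^{\mathrm{an}}, P)$ has finite Milnor number, this bounds $a$. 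Your contact-order argument has to be replaced by something of this kind.

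There are two further soft spots. In (c) the computation you propose (force the $2$-jet to be $cz^2$, complete the square) is the right one, but it does not ``give the stated inequality'': since every monomial $z^i t^j$ of $g$ has $ai + j \ge 2a$, one gets $g = cz^2 + zA(t) + B(t) + \cdots$ with $\operatorname{ord} A \ge a$, $\operatorname{ord} B \ge 2a$, so the residual $\psi = B - A^2/4c$ has order at least $2a$, i.e.\ $k \ge 2a - 1$; conversely $z^2 + t^{k+1}$ has weight $2a$ exactly when $k+1 \ge 2a$. Concretely, the $A_2$ threefold $\mathbb V(xy + z^2 + t^3)$ admits no type (1) contraction with $a = 2$, because the coefficient of $t^3$ (weight $3 < 4$) would have to vanish, forcing the germ to be $A_{\ge 3}$. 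So the equivalence your method actually proves is ``$k \ge 2a-1$'', which is not the inequality ``$k \ge a$'' in the statement; your sketch elides this mismatch rather than resolving it (the paper's own proof of (c) is just the phrase ``follows from the definition of simple singularities'', and the $n = 1$ case of the Milnor bound above is precisely $\mu \ge 2a - 1$, so the discrepancy deserves to be confronted, not hidden). Finally, in (e) you only prove one direction: excluding the simple normal forms shows that the weight condition forces non-simplicity, but the converse --- that every non-simple $cA_2$ germ can be brought to a form with $g \in (z, t^2)^3$, equivalently $\operatorname{wt} g \ge 6$ for the weights $(2,1)$ --- is asserted, not proved. The paper obtains exactly this from \cite[Theorem~I.2.55(2)]{GLS07}; you would need that citation (or the corresponding piece of the classification of simple singularities), since the weight-respecting lemmas \cref{thm:wei wei-resp reduce simple sing to normal form,thm:wei x1*x2 weight-respecting splitting} you invoke do not supply it.
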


\begin{proof}
(a) If $f$ is of the form $xy + g$ and the weight of $g \in \mathbb C\{z, t\}$ is $r_1 + r_2$ with respect to the weights $(r_1, r_2, a, 1)$, then the weight of $g$ is also $r_1' + r_2'$ with respect to the weights $(r_1', r_2', a', 1)$.

(b) By \cite[Corollary~I.2.18]{GLS07} or \cite[§12.2]{AGZV85} the Milnor number of $xy + g_{\operatorname{wt} = r_1 + r_2}$ is at least $n((n+1)a - 1)$. On the other hand, the isolated singularity $(X^{\mathrm{an}}, P)$ has finite Milnor number.

(e) By \cite[Theorem~I.2.55(2)]{GLS07} a $cA_2$-singularity $(\mathbb V(f), \bm 0)$, where $f$ is in $\mathbb C\{x, y, z, t\}$, is not contact simple if and only if there is an automorphism $\Psi$ of $\mathbb C\{x, y, z, t\}$ such that $\Psi(f) = xy + g(z, t)$, where $g$ is in the ideal $z, t^2$ of $\mathbb C\{z, t\}$.

Parts (c), (d) and (f) follow from the definition of simple singularities (\cref{def:pre simple singularity}).
\end{proof}

It is known that there are only finitely many divisorial contractions with discrepancy at most~$1$, see \cite[below Theorem~1.2]{Kaw05}. I have added a proof here since I have not found a proof in the literature. The precise statement is as follows:

\begin{proposition} \label{thm:cou finitely many if disc at most 1}
Let $X$ be a $\mathbb Q$-Gorenstein variety with terminal singularities. Then there are only finitely many divisorial contractions to~$X$ with discrepancy at most~$1$.
\end{proposition}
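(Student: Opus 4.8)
The plan is to reduce the count to a count of divisorial valuations over $X$, and then to bound those using a single log resolution. I would first recall that a divisorial contraction $\varphi\colon Y\to X$ is determined up to isomorphism over $X$ by its exceptional prime divisor $E$, viewed as a divisorial valuation $\operatorname{ord}_E$ over $X$. Indeed $\rho(Y/X)=1$ and $-K_Y$ is $\varphi$-ample, so $Y$ is the relative anticanonical model $\operatorname{Proj}_X\bigoplus_{m\ge 0}\varphi_*\mathcal O_Y(-mK_Y)$, and each sheaf $\varphi_*\mathcal O_Y(-mK_Y)$ is the subsheaf of the relevant constant sheaf cut out by the single inequality $\operatorname{ord}_E(\,\cdot\,)\ge -m\,a(E,X)$; hence it depends only on $X$ and on $\operatorname{ord}_E$. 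It therefore suffices to bound the number of valuations $E$ with $a(E,X)\le 1$ that arise this way.

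Next I would observe that the finiteness genuinely requires the centre $\varphi(E)$ to be a closed point, as in the point-centred setting of \cref{que:int finitely many up to loc ana}: blowing up a smooth curve contained in the smooth locus already produces infinitely many pairwise non-isomorphic divisorial contractions of discrepancy $1$ whose centre is a curve. So I would fix a contraction with centre a closed point $P:=\varphi(E)$, that is $c_X(E)=P$. Since $X$ is a terminal threefold its singular locus is a finite set of points, and any divisorial valuation centred at a smooth point has discrepancy at least $\dim X-1=2$; hence $P$ must be one of these finitely many singular points.

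I would then fix a log resolution $\pi\colon W\to X$ over a neighbourhood of $P$, write $K_W=\pi^*K_X+\sum_i a_iE_i$, and note $a_i>0$ by terminality. If $E$ is not one of the $E_i$, then $E$ is exceptional over the smooth variety $W$, so its centre $Z:=c_W(E)$ has codimension at least $2$, whence $a(E,W)\ge 1$; moreover $Z\subseteq\pi^{-1}(P)\subseteq\bigcup_i E_i$, so $\operatorname{ord}_E(E_i)\ge 1$ for some $i$. The transformation rule for discrepancies then gives
\[
a(E,X)=a(E,W)+\sum_i a_i\,\operatorname{ord}_E(E_i)\ge 1+\min_i a_i>1,
\]
contradicting $a(E,X)\le 1$. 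Hence $E\in\{E_i\}$, of which there are finitely many; summing over the finitely many singular points $P$ would prove the claim.

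The technical heart is the inequality in the last paragraph: one must argue that a valuation exceptional over the chosen log resolution and centred over a point of $X$ necessarily picks up at least one resolution divisor with positive coefficient, forcing $a(E,X)>1$. The ingredients — the bound $a(E,W)\ge\operatorname{codim}_W c_W(E)-1$ for $W$ smooth, and the inclusion $\pi^{-1}(P)\subseteq\bigcup_iE_i$ — are standard but must be combined with care. The only conceptual subtlety is the one flagged above: the hypothesis that the centre is a point (implicit in the paper's threefold, point-centred setting) is indispensable, since without it the count is infinite already for curve-centred contractions of discrepancy $1$.
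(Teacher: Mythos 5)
Your argument is correct and, at its core, it is the same as the paper's own proof: both reduce the count to a count of divisorial valuations (the paper cites \cite[Lemma~3.4]{Kaw01} for the fact that the valuation determines the contraction, which you rederive via the relative anticanonical $\operatorname{Proj}$), both fix a resolution whose exceptional locus is a divisor, and both use the discrepancy transformation formula to force any valuation of discrepancy at most $1$ to coincide with one of the finitely many exceptional divisors of that resolution.

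The caveat you flag, however, is a genuine divergence, and you are right about it. The paper's proof contains the unjustified assertion that ``the centre of $D$ on $Y$ is necessarily contained in an exceptional prime divisor of~$f$'', and this is precisely what fails in your example: the blowup of a smooth curve lying in the smooth locus of a terminal threefold satisfies every clause of \cref{def:pre divisorial contraction}, has discrepancy $1$, distinct curves give distinct valuations, and the centre of such a valuation on a resolution that is an isomorphism over the smooth locus is not contained in any exceptional divisor. So \cref{thm:cou finitely many if disc at most 1} is false as literally stated, and your restriction to point centres is not a convenience but a necessary repair. With that restriction your proof is complete: a point centre of a discrepancy $\le 1$ divisor over a threefold must be a singular point (divisors centred at a smooth point of a threefold have discrepancy at least $2$), a terminal threefold has only finitely many singular points, and the log-resolution/discrepancy computation then applies over each of them; this supplies exactly the justification the paper's sentence lacks. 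The point-centred statement is also the one the paper actually needs, since the centre is a point throughout \cref{que:int uncountable many non-min disc div contrs} and \cref{thm:cou counting divisorial contractions}. One last remark: your restriction to threefolds is equally necessary, since on a smooth surface the blowup of every point is a divisorial contraction of discrepancy $1$, so the clean statement is for terminal threefolds and contractions whose centre is a point.
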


\begin{proof}
Let $f\colon Y \to X$ be a resolution of singularities with exceptional locus of pure codimension~$1$. Let $v$ be the valuation on the function field $\mathbb C(X)$ given by the exceptional divisor of a divisorial contraction to~$X$. Then $v$ is equal to the valuation given by a prime divisor $D$ on a normal variety $Z$ with a proper birational morphism $Z \to Y$. The centre of $D$ on $Y$ is necessarily contained in an exceptional prime divisor of~$f$. We see that if the discrepancy of $D$ is at most~$1$, then the centre of $D$ on $Y$ necessarily coincides with an exceptional prime divisor of~$f$. So $v$ is equal to the valuation given by one of the finitely many exceptional prime divisor of~$f$. The \lcnamecref{thm:cou finitely many if disc at most 1} follows from the fact that any two divisorial contractions whose exceptional divisors define the same valuation are isomorphic over~$X$, see \cite[Lemma~3.4]{Kaw01}.
\end{proof}

\begin{theorem} \label{thm:cou counting divisorial contractions}
Let $n$ be a positive integer. Let $P$ be a point of a $\mathbb Q$-Gorenstein variety $X$ with terminal singularities. We count the number of divisorial contractions to $X$ with centre $P$.
\begin{enumerate}[label=(\alph*), ref=\alph*]
\item If $(X^{\mathrm{an}}, P)$ is smooth, then there are uncountably many divisorial contractions up to isomorphism over $X$ and countably many up to local analytic equivalence.
\item \label{itm:cou counting divisorial contractions - disc 1 finitely many} If $(X^{\mathrm{an}}, P)$ is a $cA_n$-singularity that admits only discrepancy 1 divisorial contractions, then there are exactly $n$ divisorial contractions up to isomorphism over $X$ and exactly $\lceil n/2 \rceil$ up to local analytic equivalence, where $\lceil r \rceil$ denotes the smallest integer greater than or equal to the real number~$r$.
\item If $(X^{\mathrm{an}}, P)$ is a $cA_n$-singularity that admits a divisorial contraction with discrepancy $\geq 2$, then there are uncountably many divisorial contractions up to isomorphism over $X$ and finitely many up to local analytic equivalence.
\end{enumerate}
\end{theorem}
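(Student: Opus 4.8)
The plan is to pass from contractions to their exceptional valuations and count the latter. As in the proof of \cref{thm:cou finitely many if disc at most 1}, two divisorial contractions to $X$ are isomorphic over $X$ exactly when their exceptional divisors define the same valuation $v$ on $\mathbb C(X)$, and by \cref{thm:a2a isom over X algebraically iff analytically} this matches being locally biholomorphic over $X^{\mathrm{an}}$; so the count up to isomorphism over $X$ is the number of such valuations. Two contractions over the same $X$ are locally analytically equivalent precisely when their valuations differ by an automorphism of the germ $(X^{\mathrm{an}}, P)$, so by \cref{thm:cou div contr same type are isom} the valuations realising a fixed type with fixed weights form a single orbit of $\operatorname{Aut}(X^{\mathrm{an}}, P)$, whose stabiliser is exactly the weight-respecting automorphisms (\cref{def:wei weight-respecting}), since a coordinate change fixes a monomial valuation iff it preserves all the weights. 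Hence the count up to local analytic equivalence is the number of admissible (type, weight) data, while within each datum the count over $X$ is the index $[\operatorname{Aut}(X^{\mathrm{an}},P) : (\text{weight-respecting automorphisms})]$.

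First I would settle the local analytic counts from the classification. In case (a), \cref{thm:pre div contr smooth} gives weights $(1,a,b)$ with $a,b$ coprime and $a\le b$, a countable set, distinct weights being inequivalent as the weights are invariants. In case (b) only type~\labelcref{itm:pre div contr cAn - gen} with $a=1$ occurs, forcing $(r_1,r_2,1,1)$ with $r_1+r_2=n+1$ and $r_1\le r_2$, giving $\lceil n/2\rceil$ classes. In case (c), \cref{thm:cou exist of div contr}\labelcref{itm:cou exist of div contr - upper bound for a} bounds $a$, so type~\labelcref{itm:pre div contr cAn - gen} contributes finitely many weights, and types~\labelcref{itm:pre div contr cAn - cA1} and~\labelcref{itm:pre div contr cAn - cA2} one datum each; hence finitely many classes.

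For the counts over $X$ the engine is an explicit one-parameter family. In case (a), for a weight $(1,a,b)$ with $b\ge 2$ let $v_c$ be the monomial valuation in the coordinates $(x,y,z+cx)$; since $v_c(z+\mu x)=b$ holds precisely for $\mu=c$, the parameter $c$ is recovered from $v_c$, so the $v_c$ are pairwise distinct and uncountable in number. The same idea works in case (c) whenever a type-\labelcref{itm:pre div contr cAn - gen} contraction with $a\ge 2$ exists: by \cref{thm:cou exist of div contr}\labelcref{itm:cou exist of div contr - lower r1 r2 a 1} one may take weights $(1,a(n+1)-1,a,1)$, so $x$ has weight $1<a=\operatorname{wt}z$, and setting $z'=z+cx$ rewrites $g(z,t)=g(z'-cx,t)$, whose $x$-divisible terms are absorbed into a redefinition of $y$ using the factor $xy$, restoring the form $x\tilde y+g(z',t)$ with $\operatorname{wt}g=r_1+r_2$; again $c$ is recovered from $v_c$, giving uncountably many contractions over $X$. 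For the exact number $n$ in case (b) I would argue rigidity: a discrepancy-$1$ valuation has $v(z)=v(t)=1$ (the weight-$1$ part, determined up to $\mathrm{GL}_2$) and is then pinned down by assigning $\{r_1,r_2\}$ to the two factors of the leading quadric $xy$; the swap contributes a factor $2$ when $r_1\ne r_2$ and $1$ when $r_1=r_2$, and summing over $r_1+r_2=n+1$ gives exactly $n$, finiteness being guaranteed by \cref{thm:cou finitely many if disc at most 1}.

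The one case not reached by this family is $E_6$, which by \cref{thm:cou exist of div contr}\labelcref{itm:cou exist of div contr - cA2 a geq 2,itm:cou exist of div contr - cA2 special} is the unique member of case (c) admitting no type-\labelcref{itm:pre div contr cAn - gen} contraction with $a\ge2$ (the other exceptional singularity $A_2$ of type~\labelcref{itm:pre div contr cAn - cA1} is covered, since it also admits a type-\labelcref{itm:pre div contr cAn - gen} contraction with $a=2$ by \cref{thm:cou exist of div contr}\labelcref{itm:cou exist of div contr - cA1 a geq 2}); for $E_6$ only the type-\labelcref{itm:pre div contr cAn - cA2} contraction of discrepancy~$3$ is available, and this is the main obstacle. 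The difficulty is that in the leading form $y^2+z^3+xt^2$ at weights $(4,3,2,1)$ the linear variable $x$ is multiplied by $t^2$ rather than by a coordinate, so the absorption above is unavailable and any naive lowering of a weight (such as $z\mapsto z+ct$) destroys either $\operatorname{wt}f=6$ or the $E_6$-type. I would therefore treat $E_6$ separately, exhibiting a one-parameter family of weight-respecting-inequivalent realisations of the fixed germ as $\mathbb V(f)$ with $\operatorname{wt}f=6$ — equivalently analysing $\operatorname{Aut}(E_6,\bm 0)$ modulo its weight-respecting subgroup — and confirming via \cref{thm:cou simpler theorem}\labelcref{itm:cou simpler cA2} and \cref{thm:a2a constructing weighted blowups} that the members give pairwise distinct valuations; constructing this family and proving its members non-isomorphic over $X$ is where the real work lies.
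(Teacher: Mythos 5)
Your valuation-counting framework, together with the explicit recovery of the parameter $c$ from the valuation $v_c$, gives essentially complete and correct arguments for case (a) and for the non-$E_6$ part of case (c); this is in substance the paper's own route (the paper realises the same coordinate changes $z \mapsto z + cx$, with $y$ corrected so that $xy + g$ is literally fixed, and checks non-lifting on the chart $t \neq 0$ instead of recovering $c$ from the valuation). The genuine gap is the $E_6$ sub-case of (c), which you explicitly leave open (``where the real work lies''): for an $E_6$-point, the claim that there are uncountably many divisorial contractions up to isomorphism over $X$ is precisely what remains to be proved, and no bookkeeping with \cref{thm:cou simpler theorem}\labelcref{itm:cou simpler cA2} or \cref{thm:a2a constructing weighted blowups} produces the required family. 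The missing idea is a hidden orthogonal symmetry of the normal form $f = x^2 + y^2 + z^3 + xt^2$: completing the square gives $f = (x + t^2/2)^2 + y^2 + z^3 - t^4/4$, so the group $O(2)$ acting on the plane spanned by $x + t^2/2$ and $y$ fixes $f$. Concretely, for $v \in \mathbb C$, $w$ a square root of $1 - v^2$ and $u \in \{-1, 1\}$, the paper takes
\[
\Psi_{u,v,w}\colon\ x \mapsto vx + wy + (v-1)t^2/2, \quad y \mapsto uwx - uvy + uwt^2/2, \quad z \mapsto z, \quad t \mapsto t,
\]
verifies that this fixes $f$, and shows that $\Psi_{u',v',w'} \circ \Psi_{u,v,w}^{-1}$ lifts to the $(4,3,2,1)$-blowup if and only if $v' = v$ and $w' = uu'w$; this yields the uncountably many pairwise distinct valuations. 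Your diagnosis of why the $xy$-absorption trick fails (the leading form couples $x$ to $t^2$ rather than to another coordinate) is accurate, but the fix is this explicit one-parameter group, not an abstract analysis of $\operatorname{Aut}(E_6, \bm 0)$ modulo its weight-respecting subgroup.

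Two further soft spots. First, your framework claim that the stabiliser of the valuation inside $\operatorname{Aut}(X^{\mathrm{an}}, P)$ is ``exactly the weight-respecting automorphisms'' is too strong as stated: weight-respecting maps lift (\cref{thm:wei wei-resp lifts}), but an automorphism fixing the induced valuation need only admit a weight-respecting representative modulo the defining equation, and the paper never relies on such a converse --- it verifies non-lifting by direct computation in each family. Your direct recovery of $c$ bypasses this, so it costs you nothing there, but it cannot be quoted as a general principle. Second, in case (b) the equality ``exactly $n$'' rests entirely on your phrase ``pinned down by assigning $\{r_1, r_2\}$ to the two factors of the leading quadric'', i.e.\ on the claim that for $a = 1$ the induced valuation is independent of the choice of admissible coordinates. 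That independence is the actual content of the count --- it is false for $a \geq 2$, as your own case (c) family demonstrates --- and it requires the argument of \cite[Theorem~6.4]{Hay99}, to which the paper defers, rather than a one-line rigidity assertion.
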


\begin{proof}
(a) By \cref{thm:pre div contr smooth} there are countably many divisorial contractions up to local analytic equivalence. Since the automorphism $\Psi$ of $\mathbb C\{x, y, z\}$ given by $z \mapsto z + ax$, where $a \in \mathbb C$ is non-zero, does not lift to an isomorphism of the blown-up spaces when performing a $(1, 1, 2)$-blowup, there are uncountably many divisorial contractions up to isomorphism over~$X$.

(b) Similarly to the proof of \cite[Theorem~6.4]{Hay99} we can show that there are exactly $n$ local analytic germs of divisorial contractions up to isomorphism over~$X^{\mathrm{an}}$. Note that the last sentence in the statement of \cite[Theorem~6.4]{Hay99} contains a typo, it should say: ``Furthermore, there are exactly $k$ divisors with discrepancies $1/m$ over~$X$'' (the symbol $k$ was missing). The global algebraic divisorial contractions are constructed using \cref{thm:a2a constructing projective morphisms} or \cref{thm:a2a constructing weighted blowups}. To see that there are exactly $\lceil n/2 \rceil$ divisorial contractions up to local analytic equivalence, note that $(x, y, z, t) \mapsto (y, x, z, t)$ is weight-respecting with respect to the weights $(r_1, r_2, a, 1)$ and $(r_2, r_1, a, 1)$.

(c) It follows from \cref{thm:cou exist of div contr}\labelcref{itm:cou exist of div contr - upper bound for a} and \cref{thm:cou div contr same type are isom} that there are only finitely many divisorial contractions up to local analytic equivalence.

If $(X^{\mathrm{an}}, P)$ is not an $E_6$-singularity, then there exists a divisorial contraction of type (1) of \cref{thm:cou simpler theorem} with $a > 1$. By \cref{thm:cou exist of div contr}\labelcref{itm:cou exist of div contr - lower r1 r2 a 1} there exists a divisorial contraction with $r_1 = 1$. Let $f \in \mathbb C\{x, y, z, t\}$ be as in \cref{itm:pre div contr cAn - gen f}. For any $c \in \mathbb C$ there exists an automorphism $\Phi_c$ of $\mathbb C\{x, y, z, t\}$ that fixes $f$ given by
\[
\Phi_c(x) := x,~
\Phi_c(y) := y + h,~
\Phi_c(z) := z + cx,~
\Phi_c(t) := t,
\]
where $h \in \mathbb C\{x, y, z, t\}$ depends on~$f$. Each automorphism $\Phi_c$ defines a divisorial contraction of the analytic germ $(X^{\mathrm{an}}, P)$, naming composing the divisorial contraction to $X$ with the precomposition with~$\Phi_c$. The composition $\Phi_{c'} \circ \Phi_c^{-1}$ is weight-respecting with respect to weights $(1, r_2, a, 1)$ if and only if $c = c'$. We can check on the affine patch $t \neq 0$ of the $(1, r_2, a, 1)$-blown-up space that the biholomorphic map germ corresponding to $\Phi_{c'} \circ \Phi_c^{-1}$ lifts to an isomorphism of the blown-up spaces if and only if $c = c'$. Thus there are uncountably many analytic germs of $(1, r_2, a, 1)$-blowups to $X$ with centre~$P$. By \cref{thm:a2a constructing projective morphisms} or \cref{thm:a2a constructing weighted blowups} each such analytic germ extends to a divisorial contraction to $X$ with centre~$P$.

If $(X^{\mathrm{an}}, P)$ is an $E_6$-singularity, then for any complex number $v \in \mathbb C$, any square root $w$ of $1 - v^2$ and any $u \in \{-1, 1\}$, the automorphism $\Psi_{u, v, w}$ of $\mathbb C\{x, y, z, t\}$ given by
\[
\begin{aligned}
\Psi_{u, v, w}(x) & := vx + wy + (v - 1)t^2/2,\\
\Psi_{u, v, w}(y) & := uwx - uvy + uwt^2/2,
\end{aligned}
\quad
\begin{aligned}
\Psi_{u, v, w}(z) & := z\\
\Psi_{u, v, w}(t) & := t
\end{aligned}
\]
fixes $x^2 + y^2 + z^3 + xt^2$.
Note that $\Psi_{u', v', w'} \circ \Psi_{u, v, w}^{-1}$ is weight-respecting with respect to weights $(4, 3, 2, 1)$ if and only if $v' = v$ and $w' = u u' w$. We can check that the biholomorphic map germ corresponding to $\Psi_{u', v', w'} \circ \Psi_{u, v, w}^{-1}$ lifts to an isomorphism of the blown-up spaces if and only if $v' = v$ and $w' = u u' w$. Similarly to the previous case, this shows that there are uncountably many divisorial contractions of type \labelcref{itm:pre div contr cAn - cA2} to $X$ with center $P$.
\end{proof}

\subsubsection*{Acknowledgements.} I would like to thank Masayuki Kawakita, Takuzo Okada and Yuki Yamamoto for useful discussions. I would like to thank Ziquan Zhuang for noticing a mistake in an earlier draft. Supported by the Simons Investigator Award HMS, National Science Fund of Bulgaria, National Scientific Program ``Excellent Research and People for the Development of European Science'' (VIHREN), Project No.~KP-06-DV-7.

\providecommand{\bysame}{\leavevmode\hbox to3em{\hrulefill}\thinspace}
\providecommand{\MR}{\relax\ifhmode\unskip\space\fi MR }
\providecommand{\MRhref}[2]{%
  \href{http://www.ams.org/mathscinet-getitem?mr=#1}{#2}
}
\providecommand{\href}[2]{#2}


\begin{thebibliography}{KOPP24}

\bibitem[AGZV85]{AGZV85}
V.~I. Arnold, S.~M. Gusein-Zade, and A.~N. Varchenko, \emph{Singularities of
  differentiable maps. {V}olume 1}, Monographs in mathematics, no. v. 1,
  Birkhäuser, 1985, Classification of critical points, caustics and wave
  fronts, Translated from the Russian by Ian Porteous based on a previous
  translation by Mark Reynolds.

\bibitem[AK16]{AK16}
Hamid Ahmadinezhad and Anne-Sophie Kaloghiros, \emph{Non-rigid quartic
  3-folds}, Compos. Math. \textbf{152} (2016), no.~5, 955--983.

\bibitem[AZ16]{AZ16}
Hamid Ahmadinezhad and Francesco Zucconi, \emph{Mori dream spaces and
  birational rigidity of {F}ano 3-folds}, Adv. Math. \textbf{292} (2016),
  410--445.

\bibitem[CLS11]{CLS11}
David~A. Cox, John~B. Little, and Henry~K. Schenck, \emph{Toric varieties},
  Graduate Studies in Mathematics, vol. 124, American Mathematical Society,
  Providence, RI, 2011.

\bibitem[Cor00]{Cor00}
Alessio Corti, \emph{Singularities of linear systems and {$3$}-fold birational
  geometry}, Explicit birational geometry of 3-folds, London Math. Soc. Lecture
  Note Ser., vol. 281, Cambridge Univ. Press, Cambridge, 2000, pp.~259--312.

\bibitem[CR]{CR}
Alessio Corti and Helge Ruddat, \emph{{Smoothing toroidal crossing Fano
  threefolds with admissible log singularities}}, unpublished.

\bibitem[DGPS22]{Singular}
Wolfram Decker, Gert-Martin Greuel, Gerhard Pfister, and Hans Sch\"onemann,
  \emph{{\textsc{Singular}} {4-3-0} --- {A} computer algebra system for
  polynomial computations}, \url{http://www.singular.uni-kl.de}, 2022.

\bibitem[GLS07]{GLS07}
G.-M. Greuel, C.~Lossen, and E.~Shustin, \emph{Introduction to singularities
  and deformations}, Springer Monographs in Mathematics, Springer, Berlin,
  2007.

\bibitem[Haya]{Haya}
Takayuki Hayakawa, \emph{Divisorial contractions to {cD} points}, preprint.

\bibitem[Hayb]{Hayb}
\bysame, \emph{Divisorial contractions to {cE} points}, preprint.

\bibitem[Hay99]{Hay99}
\bysame, \emph{Blowing ups of {$3$}-dimensional terminal singularities}, Publ.
  Res. Inst. Math. Sci. \textbf{35} (1999), no.~3, 515--570.

\bibitem[Hay00]{Hay00}
\bysame, \emph{Blowing ups of 3-dimensional terminal singularities. {II}},
  Publ. Res. Inst. Math. Sci. \textbf{36} (2000), no.~3, 423--456.

\bibitem[Hay05]{Hay05}
\bysame, \emph{Divisorial contractions to 3-dimensional terminal singularities
  with discrepancy one}, J. Math. Soc. Japan \textbf{57} (2005), no.~3,
  651--668.

\bibitem[JK20]{JK20}
Ariyan Javanpeykar and Robert Kucharczyk, \emph{Algebraicity of analytic maps
  to a hyperbolic variety}, Math. Nachr. \textbf{293} (2020), no.~8,
  1490--1504.

\bibitem[JS19]{JS19}
Ariyan Javanpeykar and David~E Speyer, \emph{Rational functions on reduced
  complex varieties that extend to global holomorphic functions}, MathOverflow,
  2019, \url{https://mathoverflow.net/q/333799} (version: 2019-06-12).

\bibitem[Kaw96]{Kaw96}
Yujiro Kawamata, \emph{Divisorial contractions to {$3$}-dimensional terminal
  quotient singularities}, Higher-dimensional complex varieties ({T}rento,
  1994), de Gruyter, Berlin, 1996, pp.~241--246.

\bibitem[Kaw01]{Kaw01}
Masayuki Kawakita, \emph{Divisorial contractions in dimension three which
  contract divisors to smooth points}, Invent. Math. \textbf{145} (2001),
  no.~1, 105--119.

\bibitem[Kaw02]{Kaw02}
\bysame, \emph{Divisorial contractions in dimension three which contract
  divisors to compound {$A_1$} points}, Compositio Math. \textbf{133} (2002),
  no.~1, 95--116.

\bibitem[Kaw03]{Kaw03}
\bysame, \emph{General elephants of three-fold divisorial contractions}, J.
  Amer. Math. Soc. \textbf{16} (2003), no.~2, 331--362, see
  \url{https://www.kurims.kyoto-u.ac.jp/~masayuki/Website/Documents/erratum-03.pdf}
  for erratum.

\bibitem[Kaw05]{Kaw05}
\bysame, \emph{Three-fold divisorial contractions to singularities of higher
  indices}, Duke Math. J. \textbf{130} (2005), no.~1, 57--126.

\bibitem[Kaw12]{Kaw12}
\bysame, \emph{Supplement to classification of threefold divisorial
  contractions}, Nagoya Math. J. \textbf{206} (2012), 67--73.

\bibitem[KM92]{KM92}
J\'{a}nos Koll\'{a}r and Shigefumi Mori, \emph{Classification of
  three-dimensional flips}, J. Amer. Math. Soc. \textbf{5} (1992), no.~3,
  533--703.

\bibitem[KOPP24]{KOPP24}
Igor Krylov, Takuzo Okada, Erik Paemurru, and Jihun Park, \emph{{{$2
  n^2$}-inequality for {$cA_1$} points and applications to birational
  rigidity}}, Compositio Mathematica \textbf{160} (2024), no.~7, 1551–1595,
  \href{https://doi.org/10.1112/S0010437X24007164}{\textsf{DOI:\-10.1112/S0010437X24007164}}.

\bibitem[Oka14]{Oka14}
Takuzo Okada, \emph{Birational {M}ori fiber structures of {$\mathbb{Q}$}-{F}ano
  3-fold weighted complete intersections}, Proc. Lond. Math. Soc. (3)
  \textbf{109} (2014), no.~6, 1549--1600.

\bibitem[Oka18]{Oka18}
\bysame, \emph{Birational {M}ori fiber structures of {$\mathbb Q$}-{F}ano
  3-fold weighted complete intersections, {II}}, J. Reine Angew. Math.
  \textbf{738} (2018), 73--129.

\bibitem[Oka20]{Oka20}
\bysame, \emph{Birational {M}ori fiber structures of {$\mathbb Q$}-{F}ano
  3-fold weighted complete intersections, {III}}, Kyoto J. Math. \textbf{60}
  (2020), no.~3, 825--893.

\bibitem[Pae20]{Pae20}
Erik Paemurru, \emph{{Birational models of terminal sextic double solids}},
  Loughborough University PhD Thesis (2020),
  \href{https://doi.org/10.26174/thesis.lboro.13169225.v1}{\textsf{DOI:10.26174/thesis.lboro.13169225.v1}}.

\bibitem[Pae21]{Pae21}
\bysame, \emph{{Birational geometry of sextic double solids with a compound
  {$A_n$} singularity}}, arXiv eprint, to appear in Nagoya Mathematical Journal
  (2021), \href{https://arxiv.org/abs/2101.00501}{\textsf{arXiv:2101.00501}}.

\bibitem[Yam18]{Yam18}
Yuki Yamamoto, \emph{Divisorial contractions to {$cDV$} points with discrepancy
  greater than 1}, Kyoto J. Math. \textbf{58} (2018), no.~3, 529--567.

\end{thebibliography}
\end{document}